\documentclass{article}
\usepackage{amsmath,amscd,amstext,amssymb,amsfonts}
\usepackage{amsthm}
\usepackage{graphics}
\usepackage{latexsym,empheq,mathtools}
\usepackage{color,xcolor}
\usepackage{bookman}
\usepackage{bm}
\usepackage{cite}
\usepackage{enumerate}
\usepackage{amscd}
\usepackage{tikz-cd}
\numberwithin{equation}{section}

\textheight24cm
\textwidth16.5cm
\oddsidemargin0cm
\evensidemargin0cm
\topmargin-2cm
\sloppy

\theoremstyle{plain}
\newtheorem{theorem}{Theorem}[section]
\newtheorem{proposition}[theorem]{Proposition}
\newtheorem{definition}[theorem]{Definition}

\theoremstyle{definition}
\newtheorem{remark}[theorem]{Remark}

\pagestyle{headings}
\sloppy

\begin{document}

\newcommand{\SBsp}{\ensuremath{X^s_pB}}
\newcommand{\SBxp}[1]{\ensuremath{X^{#1}_pB}}
\newcommand{\SBxx}[2]{\ensuremath{X^{#1}_{#2}B}}
\newcommand{\SHsp}{\ensuremath{X^s_pH}}
\newcommand{\SHxp}[1]{\ensuremath{X^{#1}_pH}}
\newcommand{\SHxx}[2]{\ensuremath{X^{#1}_{#2}H}}
\newcommand{\TBsp}{\ensuremath{Y^s_pB}}
\newcommand{\TBxp}[1]{\ensuremath{Y^{#1}_pB}}
\newcommand{\TBxx}[2]{\ensuremath{Y^{#1}_{#2}B}}
\newcommand{\THsp}{\ensuremath{Y^s_pH}}
\newcommand{\THxp}[1]{\ensuremath{Y^{#1}_pH}}
\newcommand{\THxx}[2]{\ensuremath{Y^{#1}_{#2}H}}
\newcommand{\SAxp}[1]{\ensuremath{X^{#1}_pA}}
\newcommand{\TAxp}[1]{\ensuremath{Y^{#1}_pA}}
\newcommand{\snp}{\ensuremath{x^n_p}}
\newcommand{\tnp}{\ensuremath{y^n_p}}
\newcommand{\dnp}{\ensuremath{d^n_p}}
\newcommand{\dnx}[1]{\ensuremath{d^n_{#1}}}


\newcommand{\eq}{equation}
\newcommand{\real}{\ensuremath{\mathbb R}}
\newcommand{\comp}{\ensuremath{\mathbb C}}
\newcommand{\rn}{\ensuremath{{\mathbb R}^n}}
\newcommand{\no}{\ensuremath{\nat_0}}
\newcommand{\ganz}{\ensuremath{\mathbb Z}}
\newcommand{\zn}{\ensuremath{{\mathbb Z}^n}}
\newcommand{\As}{\ensuremath{A^s_{p,q}}}
\newcommand{\Bs}{\ensuremath{B^s_{p,q}}}
\newcommand{\Fs}{\ensuremath{F^s_{p,q}}}
\newcommand{\nat}{\ensuremath{\mathbb N}}
\newcommand{\Om}{\ensuremath{\Omega}}
\newcommand{\di}{\ensuremath{{\mathrm d}}}
\newcommand{\Cc}{\ensuremath{\mathcal C}}
\newcommand{\vp}{\ensuremath{\varphi}}
\newcommand{\hra}{\ensuremath{\hookrightarrow}}
\newcommand{\supp}{\ensuremath{\mathrm{supp \,}}}
\newcommand{\ve}{\ensuremath{\varepsilon}}
\newcommand{\vk}{\ensuremath{\varkappa}}
\newcommand{\vr}{\ensuremath{\varrho}}
\newcommand{\pa}{\ensuremath{\partial}}
\newcommand{\id}{\ensuremath{\mathrm{id}}}
\newcommand{\pr}{\pageref}
\newcommand{\wh}{\ensuremath{\widehat}}
\newcommand{\wt}{\ensuremath{\widetilde}}
\newcommand{\os}{\ensuremath{\overset}}
\newcommand{\rank}{\ensuremath{\mathrm{rank \,}}}
\newcommand{\SRn}{\mathcal{S}(\rn)}
\newcommand{\SpRn}{\mathcal{S}'(\rn)}
\newcommand{\Dd}{\mathrm{D}}
\newcommand{\Ft}{\mathcal{F}}
\newcommand{\Fti}{\mathcal{F}^{-1}}
\newcommand{\bli}{\begin{enumerate}[{\upshape\bfseries (i)}]}
  \newcommand{\eli}{\end{enumerate}}
\newcommand{\nd}{\ensuremath {{n}}} 
\newcommand{\Ae}{\ensuremath{A^{s_1}_{p_1,q_1}}}  
\newcommand{\Az}{\ensuremath{A^{s_2}_{p_2,q_2}}}  
\newcommand{\open}[1]{\smallskip\noindent\fbox{\parbox{\textwidth}{\color{blue}\bfseries\begin{center}
      #1 \end{center}}}\\ \smallskip}
\newcommand{\red}[1]{{\color{red}#1}}
\renewcommand{\red}[1]{{#1}}
\newcommand{\blue}[1]{{\color{blue}#1}}
\renewcommand{\blue}[1]{{#1}}
\newcommand{\purple}[1]{{\color{purple}#1}}
\renewcommand{\purple}[1]{{#1}}
\newcommand{\cyan}[1]{{\color{cyan}#1}}
\newcommand{\green}[1]{{\color{green}#1}}
\newcommand{\unterbild}[1]{{\noindent\refstepcounter{figure}\upshape\bfseries
    Figure {\thefigure}{\label{#1}}}
}%
\newcommand{\ignore}[1]{}

\title{Mapping properties of Fourier transforms, revisited}
\author{Dorothee D. Haroske\footnotemark[1], Leszek Skrzypczak\footnotemark[1], and Hans Triebel}
\footnotetext[1]{The first and second author were partially supported by the German Research Foundation (DFG), Grant no. Ha 2794/8-1.}
\maketitle

\begin{abstract}
{The paper deals with continuous and compact mappings generated by the Fourier transform between distinguished Besov spaces $B^s_p(\rn)
  = B^s_{p,p} (\rn)$, $1\le p \le \infty$, and between Sobolev spaces $H^s_p (\rn)$, $1<p< \infty$. In contrast to the paper {\em H. Triebel, Mapping properties of Fourier transforms. Z. Anal. Anwend. 41 (2022), 133--152}, 
based
mainly on embeddings between related weighted spaces, we rely on wavelet expansions, duality and interpolation of corresponding
(unweighted) spaces, and (appropriately extended) Hausdorff-Young inequalities. The degree of compactness will be measured in terms of entropy numbers and approximation numbers,  now using the
symbiotic relationship to weighted spaces.}      
\end{abstract}

{\bfseries Keywords:} Fourier transform, Besov spaces, Sobolev spaces, entropy numbers, approximation numbers.\\

{\bfseries 2020 MSC:} 46E35

\ignore{

\documentclass{amse-new}

\usepackage{latexsym,mathtools}
\usepackage{color}
\usepackage{tikz-cd}
\numberwithin{equation}{section}
\usepackage{enumerate}




\begin{document}

 \PageNum{1}
 \Volume{202x}{Sep.}{x}{x}
 \OnlineTime{August 15, 202x}
 \DOI{0000000000000000}
 \EditorNote{Received x x, 202x, accepted x x, 202x}

\abovedisplayskip 6pt plus 2pt minus 2pt \belowdisplayskip 6pt
plus 2pt minus 2pt
\allowdisplaybreaks[4]

\newcommand{\SBsp}{\ensuremath{X^s_pB}}
\newcommand{\SBxp}[1]{\ensuremath{X^{#1}_pB}}
\newcommand{\SBxx}[2]{\ensuremath{X^{#1}_{#2}B}}
\newcommand{\SHsp}{\ensuremath{X^s_pH}}
\newcommand{\SHxp}[1]{\ensuremath{X^{#1}_pH}}
\newcommand{\SHxx}[2]{\ensuremath{X^{#1}_{#2}H}}
\newcommand{\TBsp}{\ensuremath{Y^s_pB}}
\newcommand{\TBxp}[1]{\ensuremath{Y^{#1}_pB}}
\newcommand{\TBxx}[2]{\ensuremath{Y^{#1}_{#2}B}}
\newcommand{\THsp}{\ensuremath{Y^s_pH}}
\newcommand{\THxp}[1]{\ensuremath{Y^{#1}_pH}}
\newcommand{\THxx}[2]{\ensuremath{Y^{#1}_{#2}H}}
\newcommand{\SAxp}[1]{\ensuremath{X^{#1}_pA}}
\newcommand{\TAxp}[1]{\ensuremath{Y^{#1}_pA}}
\newcommand{\snp}{\ensuremath{x^n_p}}
\newcommand{\tnp}{\ensuremath{y^n_p}}
\newcommand{\dnp}{\ensuremath{d^n_p}}
\newcommand{\dnx}[1]{\ensuremath{d^n_{#1}}}


\newcommand{\eq}{equation}
\newcommand{\real}{\ensuremath{\mathbb R}}
\newcommand{\comp}{\ensuremath{\mathbb C}}
\newcommand{\rn}{\ensuremath{{\mathbb R}^n}}
\newcommand{\no}{\ensuremath{\nat_0}}
\newcommand{\ganz}{\ensuremath{\mathbb Z}}
\newcommand{\zn}{\ensuremath{{\mathbb Z}^n}}
\newcommand{\As}{\ensuremath{A^s_{p,q}}}
\newcommand{\Bs}{\ensuremath{B^s_{p,q}}}
\newcommand{\Fs}{\ensuremath{F^s_{p,q}}}
\newcommand{\nat}{\ensuremath{\mathbb N}}
\newcommand{\Om}{\ensuremath{\Omega}}
\newcommand{\di}{\ensuremath{{\mathrm d}}}
\newcommand{\Cc}{\ensuremath{\mathcal C}}
\newcommand{\vp}{\ensuremath{\varphi}}
\newcommand{\hra}{\ensuremath{\hookrightarrow}}
\newcommand{\supp}{\ensuremath{\mathrm{supp \,}}}
\newcommand{\ve}{\ensuremath{\varepsilon}}
\newcommand{\vk}{\ensuremath{\varkappa}}
\newcommand{\vr}{\ensuremath{\varrho}}
\newcommand{\pa}{\ensuremath{\partial}}
\newcommand{\id}{\ensuremath{\mathrm{id}}}
\newcommand{\pr}{\pageref}
\newcommand{\wh}{\ensuremath{\widehat}}
\newcommand{\wt}{\ensuremath{\widetilde}}
\newcommand{\os}{\ensuremath{\overset}}
\newcommand{\rank}{\ensuremath{\mathrm{rank \,}}}
\newcommand{\SRn}{\mathcal{S}(\rn)}
\newcommand{\SpRn}{\mathcal{S}'(\rn)}
\newcommand{\Dd}{\mathrm{D}}
\newcommand{\Ft}{\mathcal{F}}
\newcommand{\Fti}{\mathcal{F}^{-1}}
\newcommand{\bli}{\begin{enumerate}[{\upshape\bfseries (i)}]}
  \newcommand{\eli}{\end{enumerate}}
\newcommand{\nd}{\ensuremath {{n}}} 
\newcommand{\Ae}{\ensuremath{A^{s_1}_{p_1,q_1}}}  
\newcommand{\Az}{\ensuremath{A^{s_2}_{p_2,q_2}}}  
\newcommand{\open}[1]{\smallskip\noindent\fbox{\parbox{\textwidth}{\color{blue}\bfseries\begin{center}
      #1 \end{center}}}\\ \smallskip}
\newcommand{\red}[1]{{\color{red}#1}}
\renewcommand{\red}[1]{{#1}}
\newcommand{\purple}[1]{{\color{purple}#1}}
\renewcommand{\purple}[1]{{#1}}
\newcommand{\unterbild}[1]{{\noindent\refstepcounter{figure}\upshape\bfseries
    Figure {\thefigure}{\label{#1}}}
}%
\newcommand{\ignore}[1]{}
}




\section{Introduction}   \label{S1}
Let $\Ft$,
\begin{\eq}   \label{1.1}
\big(\Ft \vp\big)(\xi) = (2 \pi)^{-n/2} \int_{\rn} e^{-i x \xi} \, \vp (x) \, \di x, \qquad \vp \in \SRn, \quad \xi \in \rn,
\end{\eq}
be the classical Fourier transform, extended  in the usual way to $\SpRn$, $n\in \nat$. The mapping properties
\begin{\eq}   \label{1.2}
\Ft\SRn = \SRn, \qquad \Ft\SpRn = \SpRn,
\end{\eq}
and
\begin{\eq}   \label{1.3}
\Ft:  L_p (\rn) \hra L_{p'} (\rn), \quad  1\le p \le 2, \quad \frac{1}{p} + \frac{1}{p'} =1, \quad \Ft L_2 (\rn) = L_2 (\rn),
\end{\eq}
are cornerstones of Fourier Analysis. We dealt in \cite{Tri22a} with further mapping properties typically of type 
\begin{\eq}   \label{1.4}
\Ft: \quad B^{s_1}_p (\rn) \hra B^{s_2}_p (\rn), \qquad 1<p<\infty,
\end{\eq}
where
\begin{\eq}   \label{1.5}
B^s_p (\rn) = B^s_{p,p} (\rn), \qquad 0<p \le \infty, \quad s \in \real,
\end{\eq}
are distinguished Besov spaces. Let, in slight modification of the notation in \cite{Tri22a},
\begin{\eq}   \label{1.7}
\SBsp (\rn) = B^{\snp + s}_p (\rn) \quad \text{and} \quad \TBsp (\rn) = B^{\tnp -s}_p (\rn),
\end{\eq}
where $n\in \nat$, $1\le p \le \infty$, and $s\in \real$, and 
\begin{\eq}  \label{1.6}
\dnp = 2n \left( \frac{1}{p} - \frac{1}{2} \right) \quad \text{and} \quad \snp = \max (0, \dnp), \quad \tnp = \min(0, \dnp),
\end{\eq}
as indicated in Figure~\ref{fig-1} below. We prove in Section~\ref{S3} that
\begin{\eq}   \label{1.8}
\Ft: \quad \SBxp{s_1} (\rn) \hra \TBxp{s_2} (\rn), \qquad 1\le p \le \infty,
\end{\eq}
is continuous if, and only if, both $s_1 \ge 0$, $s_2 \ge 0$, and that this mapping is compact if, and only if, both $s_1 >0$, 
$s_2 >0$. This modifies corresponding assertions in \cite{Tri22a}, including now limiting cases and based on new proofs. We justify
that these assertions remain valid if one replaces $B$ in \eqref{1.7} by $H$, where $H^\sigma_p (\rn)$, $1 <p<\infty$, $\sigma \in
\real$, are the (fractional) Sobolev spaces. In Section~\ref{S4} we deal with the degree of compactness of the compact mappings $\Ft$
in \eqref{1.8} in terms of entropy numbers and approximation numbers reduced to related assertions in terms of weighted spaces.

\section{Preliminaries}    \label{S2}
\subsection{Definitions}   \label{S2.1}
We use standard notation. Let $\nat$ be the collection of all natural numbers and $\no = \nat \cup \{0 \}$. Let $\rn$ be Euclidean $n$-space where $n\in \nat$. Put $\real = \real^1$, whereas $\comp$ stands for the complex plane.
Let $\SRn$ be the Schwartz space of all complex-valued rapidly decreasing infinitely differentiable functions on $\rn$, and let $\SpRn$ be the dual space of all tempered distributions on \rn.
Furthermore, $L_p (\rn)$ with $0< p \le \infty$, is the standard complex quasi-Banach space with respect to the Lebesgue measure, quasi-normed by
\begin{\eq}   \label{2.1}
\| f \, | L_p (\rn) \| = \Big( \int_{\rn} |f(x)|^p \, \di x \Big)^{1/p}
\end{\eq}
with the obvious modification if $p=\infty$.  
As usual, $\ganz$ is the collection of all integers, and $\zn$ where $n\in \nat$ denotes the
lattice of all points $m= (m_1, \ldots, m_n) \in \rn$ with $m_k \in \ganz$. Furthermore, $a_i \sim b_i$ (equivalence), $i\in I$ 
(arbitrary index set), for two sets of non-negative real numbers $\{a_i: i\in I\}$ and $\{b_i: i\in I\}$ means that there are two 
positive numbers $c_1$ and $c_2$ such that $c_1 a_i \le b_i \le c_2 a_i$ for all $i\in I$.

If $\vp \in \SRn$, then
\begin{\eq}  \label{2.2}
\wh{\vp} (\xi) = (\Ft \vp)(\xi) = (2\pi )^{-n/2} \int_{\rn} e^{-ix \xi} \vp (x) \, \di x, \qquad \xi \in  \rn,
\end{\eq}
denotes the Fourier transform of \vp. As usual, $\Fti \vp$ and $\vp^\vee$ stand for the inverse Fourier transform, given by the right-hand side of
\eqref{2.2} with $i$ in place of $-i$. Here $x \xi$ stands for the scalar product in \rn. Both $\Ft$ and $\Fti$ are extended to $\SpRn$ in the
standard way. Let $\vp_0 \in \SRn$ with
\begin{\eq}   \label{2.3}
\vp_0 (x) =1 \ \text{if $|x|\le 1$} \quad \text{and} \quad \vp_0 (x) =0 \ \text{if $|x| \ge 3/2$},
\end{\eq}
and let
\begin{\eq}   \label{2.4}
\vp_k (x) = \vp_0 (2^{-k} x) - \vp_0 (2^{-k+1} x ), \qquad x\in \rn, \quad k\in \nat.
\end{\eq}
Since
\begin{\eq}   \label{2.5}
\sum^\infty_{j=0} \vp_j (x) =1 \qquad \text{for} \quad x\in \rn,
\end{\eq}
$\vp =\{ \vp_j \}^\infty_{j=0}$ forms a dyadic resolution of unity. The entire analytic functions $(\vp_j \wh{f} )^\vee (x)$ make sense pointwise in $\rn$ for any $f\in \SpRn$. 

\begin{definition}   \label{D2.1}
Let $\vp = \{ \vp_j \}^\infty_{j=0}$ be the above dyadic resolution  of unity. Let 
$s\in \real$ and $0<p,q \le \infty$ $($with $p<\infty$ for the $F$-spaces$)$.
Then $\Bs (\rn)$ is the collection of all $f \in \SpRn$ such that
\begin{\eq}   \label{2.6}
\| f \, | \Bs (\rn) \|_{\vp} = \Big( \sum^\infty_{j=0} 2^{jsq} \big\| (\vp_j \widehat{f})^\vee \, | L_p (\rn) \big\|^q \Big)^{1/q}
\end{\eq}
is finite, whereas $\Fs (\rn)$ is the collection of all $f \in \SpRn$ such that
\begin{\eq}   \label{2.7}
\| f \, |\Fs (\rn) \|_{\vp} = \Big\| \Big( \sum^\infty_{j=0} 2^{jsq} \big| \big( \vp_j \wh{f} \big)^\vee (\cdot) \big|^q \Big)^{1/q}
\, | L_p (\rn) \Big\|
\end{\eq}
is finite $($with the usual modification if $q= \infty)$. 
\end{definition}

\begin{remark}   \label{R2.2}
These are the well-known inhomogeneous function spaces. They are independent of the above resolution of unity $\vp$ according to 
\eqref{2.3}--\eqref{2.5} (equivalent quasi-norms indicated below by $\sim$). 
This justifies the omission of the subscript $\vp$ in \eqref{2.6} and \eqref{2.7} in the sequel.
In \cite{T20} one finds (historical) references, explanations and discussions. 
We rely mainly on the special cases
\begin{align}   \label{2.8}
B^s_p (\rn) & = B^s_{p,p} (\rn) \qquad 0<p \le \infty, \quad s\in \real,\\
 \label{2.9}
\Cc^s (\rn) & = B^s_\infty (\rn), \qquad s\in \real, 
\intertext{and}
\label{2.10}
H^s_p (\rn) &= F^s_{p,2} (\rn), \qquad 1<p<\infty, \quad s\in \real.
\end{align}
Here $B^s_p (\rn)$ are special Besov spaces, including the H\"{o}lder-Zygmund spaces $\Cc^s (\rn)$, whereas $H^s_p (\rn)$ are 
(fractional) Sobolev spaces which can be equivalently normed by
\begin{\eq}   \label{2.11}
\| f \, | H^s_p (\rn) \| \sim \big\| \big( (1+|\xi|^2 )^{s/2} \wh{f} \big)^\vee \, | L_p (\rn) \big\|, \quad s\in \real.
\end{\eq}
We need later on the following refinement of \eqref{2.11}. Let $s>0$ and $1<p<\infty$. Then $H^s_p (\rn)$ is the collection of all
$f\in L_p (\rn)$ such that
\begin{\eq}   \label{2.12}
\begin{aligned}
\| f \, | H^s_p (\rn) \| &\sim \|f\,|L_p (\rn) \| + \big\| \big(|\xi|^s \wh{f} \, \big)^\vee | L_p (\rn)\| \\
&\sim \| f \, | L_p (\rn) \| + \big\| (- \Delta)^{s/2} f \, | L_p (\rn) \big\|
\end{aligned}
\end{\eq}
is finite (equivalent norms),
where the second equivalence may be considered as the fashionable re-writing of the first one. If  $s$ is not an even natural
number, then \eqref{2.12} requires some technical explanations and justifications. This may be found in \cite[Proposition 4.1, 
p.\,135]{T13}. \red{Note that for $p=2$, $B^s_2(\rn)=H^s_2(\rn) =H^s(\rn)$.}
\end{remark}

We repeat and complement what has already been said in the Introduction. Let $n\in \nat$, $1\le p \le \infty$,
\begin{\eq}  \label{2.13}
\dnp = 2n \left( \frac{1}{p} - \frac{1}{2} \right) \quad \text{and} \quad \snp = \max (0, \dnp), \quad \tnp = \min(0, \dnp),
\end{\eq}
as indicated in the figure below which is a notational modification of \cite[Figure 1]{HST22}. For convenience, we sketch the usual $(\frac1p,s)$-diagram in Figure~\ref{fig-1} where any space $B^s_p(\rn)$ is indicated by its parameters $s$ and $p$. \red{Obviously, $d^n_2 = x^n_2 = y^n_2 = 0$.}\\

\begin{definition}    \label{D2.3}
  Let $n\in \nat$ and $s\in \real$.
  \bli
  \item
Let $1\le p \le \infty$. Then
\begin{\eq}   \label{2.14}
\SBsp (\rn) = B^{\snp +s}_p (\rn) \quad \text{and} \quad \TBsp (\rn) = B^{\tnp -s}_p (\rn).
\end{\eq}
\item Let $1<p< \infty$. Then
\begin{\eq}   \label{2.15}
\SHsp (\rn) = H^{\snp +s}_p (\rn) \quad \text{and} \quad \THsp (\rn) = H^{\tnp -s}_p (\rn).
\end{\eq}
\eli
\end{definition}

\begin{remark}   \label{R2.4}
This modifies and extends \cite[Definition 2.3]{Tri22a}. As there it comes out that $|\dnp|$ is a natural gap if one asks for 
continuous mappings
\begin{\eq}   \label{2.16}
\Ft: \quad \SAxp{s_1} (\rn) \hra \TAxp{s_2} (\rn), \qquad A  \in \{B, H \},
\end{\eq}
for fixed $p$ between the {source spaces} $\SAxp{s_1} (\rn)$ and the {target spaces} $\TAxp{s_2} (\rn)$, what may also explain
the notation $\snp$ and $\tnp$ in \eqref{2.13}. \red{In case of $p=2$, then
  \begin{equation}\label{2.16a}
    \SBxx{s}{2}(\rn) = B^s_2(\rn)=H^s_2(\rn)=\SHxx{s}{2}(\rn), \quad \TBxx{s}{2}(\rn)=B^{-s}_2(\rn)=H^{-s}_2(\rn) = \THxx{s}{2}(\rn).
  \end{equation}}%
In the present paper  we deal mainly with unweighted spaces, both in
the formulation of assertions and in the related proofs at least as far as continuous mappings in Section~\ref{S3} are concerned.
But it is quite clear now that a comprehensive theory should also cover 
corresponding weighted spaces with so-called admissible weights $w_\alpha (x) = (1 + |x|^2 )^{\alpha/2}$, $x\in \rn$, $\alpha \in 
\real$. We used already weighted spaces in \cite{Tri22a} in the proofs (but not in the formulation of results). In \cite{Tri22b} we 
indicated how assertions can be extended from unweighted spaces to their weighted counterparts. We refer the reader in this context
also to \cite{HST22} where we dealt with nuclear mappings  generated by the Fourier transform. In the present paper we measure the
degree of compactness  in terms of entropy numbers and approximation numbers. Then we rely on already available  assertions for
embeddings between related weighted spaces. The outcome may justify the new notation in \eqref{2.14} and \eqref{2.15}. 
\end{remark}

\noindent\begin{minipage}{\textwidth}
  ~\hfill\begin{picture}(0,0)%
\includegraphics{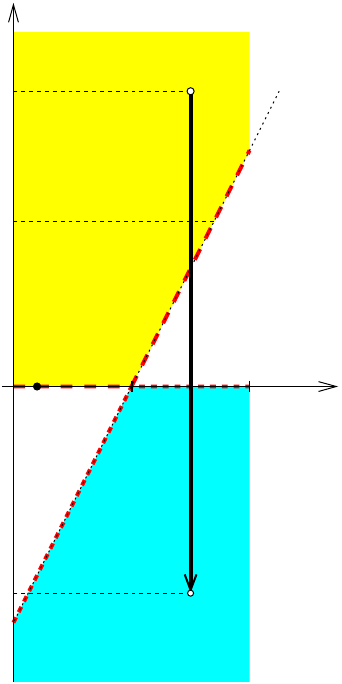}%
\end{picture}%
\setlength{\unitlength}{4144sp}%
\begingroup\makeatletter\ifx\SetFigFont\undefined%
\gdef\SetFigFont#1#2#3#4#5{%
  \reset@font\fontsize{#1}{#2pt}%
  \fontfamily{#3}\fontseries{#4}\fontshape{#5}%
  \selectfont}%
\fi\endgroup%
\begin{picture}(2589,5199)(349,-4573)
\put(856,-3436){\makebox(0,0)[lb]{\smash{{\SetFigFont{10}{12.0}{\familydefault}{\mddefault}{\updefault}{\color[rgb]{0,0,0}$s=\tnp$}%
}}}}
\put(2386,-376){\makebox(0,0)[lb]{\smash{{\SetFigFont{10}{12.0}{\familydefault}{\mddefault}{\updefault}{\color[rgb]{0,0,0}$s=\dnp$}%
}}}}
\put(1846,-1726){\makebox(0,0)[lb]{\smash{{\SetFigFont{10}{12.0}{\familydefault}{\mddefault}{\updefault}{\color[rgb]{0,0,0}$\Ft$}%
}}}}
\put(1621,-1726){\makebox(0,0)[rb]{\smash{{\SetFigFont{10}{12.0}{\familydefault}{\mddefault}{\updefault}{\color[rgb]{0,0,0}$s=\snp$}%
}}}}
\put(631,-2221){\makebox(0,0)[b]{\smash{{\SetFigFont{10}{12.0}{\familydefault}{\mddefault}{\updefault}$L_p$}}}}
\put(1261,-2176){\makebox(0,0)[b]{\smash{{\SetFigFont{10}{12.0}{\familydefault}{\mddefault}{\updefault}$\frac12$}}}}
\put(2251,-2176){\makebox(0,0)[b]{\smash{{\SetFigFont{10}{12.0}{\familydefault}{\mddefault}{\updefault}$1$}}}}
\put(1666, 29){\makebox(0,0)[b]{\smash{{\SetFigFont{10}{12.0}{\familydefault}{\mddefault}{\updefault}$B^{s_1+\snp}_p=\SBxp{s_1}$}}}}
\put(1666,-4156){\makebox(0,0)[b]{\smash{{\SetFigFont{10}{12.0}{\familydefault}{\mddefault}{\updefault}$B^{\tnp-s_2}_p=\TBxp{s_2}$}}}}
\put(406,-4201){\makebox(0,0)[rb]{\smash{{\SetFigFont{10}{12.0}{\familydefault}{\mddefault}{\updefault}$-n$}}}}
\put(2701,-2491){\makebox(0,0)[lb]{\smash{{\SetFigFont{10}{12.0}{\familydefault}{\mddefault}{\updefault}$\frac1p$}}}}
\put(406,389){\makebox(0,0)[rb]{\smash{{\SetFigFont{10}{12.0}{\familydefault}{\mddefault}{\updefault}$s$}}}}
\put(406,-3931){\makebox(0,0)[rb]{\smash{{\SetFigFont{10}{12.0}{\familydefault}{\mddefault}{\updefault}$-s_2$}}}}
\put(406,-1096){\makebox(0,0)[rb]{\smash{{\SetFigFont{10}{12.0}{\familydefault}{\mddefault}{\updefault}$s_1$}}}}
\put(406,-106){\makebox(0,0)[rb]{\smash{{\SetFigFont{10}{12.0}{\familydefault}{\mddefault}{\updefault}$s_1+\snp$}}}}
\put(1756,-2176){\makebox(0,0)[rb]{\smash{{\SetFigFont{10}{12.0}{\familydefault}{\mddefault}{\updefault}$\frac1p$}}}}
\end{picture}%
\hfill~\\
~\hspace*{\fill}\unterbild{fig-1}
\end{minipage}
\smallskip~

\subsection{Wavelet characterizations}    \label{S2.2}
We assume that the reader is familiar with the basic assertions for the spaces $\As (\rn)$, $A \in \{B,F \}$, according to Definition
\ref{D2.1}, including wavelet characterizations \purple{using Daubechies-type wavelets, cf. \cite{Dau}}. It is sufficient for what follows to concentrate on the $B$-spaces. We
follow  otherwise
\cite[Section 1.2.1, pp.\,7--10]{T20}. There one finds explanations, discussions and, in particular, \purple{further} references. This will not
be repeated here. As usual, $C^{u} (\real)$ with $u\in
\nat$ collects all bounded complex-valued continuous functions on $\real$ having continuous bounded derivatives up to order $u$ inclusively. Let
\begin{\eq}   \label{2.17}
\psi_F \in C^{u} (\real), \qquad \psi_M \in C^{u} (\real), \qquad u \in \nat,
\end{\eq}
be {\em real} compactly supported Daubechies wavelets with
\begin{\eq}   \label{2.18}
\int_{\real} \psi_M (x) \, x^v \, \di x =0 \qquad \text{for all $v\in \no$ with $v<u$.}
\end{\eq}
One extends these wavelets from $\real$ to $\rn$ by the usual tensor product procedure. Let $n\in \nat$ and let
\begin{\eq}   \label{2.19}
G = (G_1, \ldots, G_n) \in G^0 = \{F,M \}^n
\end{\eq}
which means that $G_r$ is either $F$ or $M$. Furthermore, let
\begin{\eq}   \label{2.20}
G= (G_1, \ldots, G_n) \in G^* = G^j = \{F, M \}^{n*}, \qquad j \in \nat,
\end{\eq}
which means that $G_r$ is either $F$ or $M$, where $*$ indicates that at least one of the components of $G$ must be an $M$. Hence $G^0$ has $2^n$ elements, whereas $G^j$ with $j\in \nat$ and $G^*$ have $2^n -1$ elements. Let
\begin{\eq}   \label{2.21}
\psi^j_{G,m} (x) = \prod^n_{l=1} \psi_{G_l} \big(2^j x_l -m_l \big), \qquad G\in G^j, \quad m \in \zn, \quad x\in \rn,
\end{\eq}
where (now) $j \in \no$. We always assume that $\psi_F$ and $\psi_M$ in \eqref{2.17} have $L_2$ norm 1. Then 
\begin{\eq}   \label{2.22}
 \big\{ 2^{jn/2} \psi^j_{G,m}: \ j \in \no, \ G\in G^j, \ m \in \zn \big\}
\end{\eq}
is an orthonormal basis in $L_2 (\rn)$ (for any $u\in \nat$) and 
\begin{\eq}   \label{2.23}
f = \sum^\infty_{j=0} \sum_{G \in G^j} \sum_{m \in \zn} \lambda^{j,G}_m \, \psi^j_{G,m}
\end{\eq}
with
\begin{\eq}   \label{22.24}
\lambda^{j,G}_m = \lambda^{j,G}_m (f) = 2^{jn} \int_{\rn} f(x) \, \psi^j_{G,m} (x) \, \di x = 2^{jn} \big(f, \psi^j_{G,m} \big)
\end{\eq}
is the corresponding expansion. For our purpose it will be sufficient to describe how wavelet expansions of $f\in \Bs (\rn)$ with
$0<p,q \le \infty$ and $s\in \real$ look like. Let
\begin{\eq}   \label{2.25}
\lambda = \big\{ \lambda^{j,G}_m \in \comp: \ j \in \no, \ G \in G^j, \ m \in \zn \big\}
\end{\eq}
and
\begin{\eq}   \label{2.26}
b^s_{p,q} (\rn) = \big\{ \lambda: \ \| \lambda \, | b^s_{p,q} (\rn) \| < \infty \big\}
\end{\eq}
with
\begin{\eq}   \label{2.27}
\| \lambda \, | b^s_{p,q} (\rn) \| = \Big( \sum^\infty_{j=0} 2^{j(s- \frac{n}{p})q} \sum_{G \in G^j} \Big( \sum_{m \in \zn} |\lambda^{j,G}_m|^p \Big)^{q/p} \Big)^{1/q}
\end{\eq}
(usual modification if $\max(p,q) = \infty$). Let
\begin{\eq}   \label{2.28}
\sigma^n_p = n \Big( \max \big( \frac{1}{p}, 1 \big) - 1 \Big), \qquad 0<p \le \infty.
\end{\eq}

\begin{proposition}   \label{P2.5}
Let $0<p \le \infty$, $0<q \le \infty$, $s\in \real$ and
\begin{\eq}   \label{2.29}
u > \max (s, \sigma^n_p -s).
\end{\eq}
Let $f \in \SpRn$. Then $f \in \Bs (\rn)$ if, and only if, it can be represented as
\begin{\eq}   \label{2.30}
f= \sum_{\substack{j\in \no, G\in G^j, \\ m\in \zn}}
 \lambda^{j,G}_m \, \psi^j_{G,m}, \qquad \lambda \in b^s_{p,q} (\rn),
\end{\eq}
the unconditional convergence being in $\SpRn$. The representation \eqref{2.30} is unique,
\begin{\eq}  \label{2.31}
\lambda^{j,G}_m = \lambda^{j,G}_m (f) =  2^{jn} \big( f, \psi^j_{G,m} \big)
\end{\eq}
and
\begin{\eq}   \label{2.32}
I: \quad f \mapsto \big\{ \lambda^{j,G}_m (f) \big\}
\end{\eq}
is an isomorphic mapping of $\Bs (\rn)$ onto $b^s_{p,q} (\rn)$.
\end{proposition}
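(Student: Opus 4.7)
The plan is the two-sided argument that is standard in the wavelet theory of Besov spaces (and developed systematically in \cite[Section~1.2.1]{T20}): prove boundedness of the synthesis map $\lambda \mapsto \sum \lambda^{j,G}_m \psi^j_{G,m}$ from $b^s_{p,q}(\rn)$ to $\Bs(\rn)$, and boundedness of the analysis map $f \mapsto \{ 2^{jn}(f,\psi^j_{G,m})\}$ in the reverse direction. Once both are in hand, the $L_2$-orthonormality $2^{jn}(\psi^j_{G,m}, \psi^{j'}_{G',m'}) = \delta_{jj'}\delta_{GG'}\delta_{mm'}$ of the system \eqref{2.22} will force uniqueness of the representation and show that $I$ in \eqref{2.32} is an isomorphism, because synthesis is a left inverse of analysis on $\Bs(\rn)$ and the reverse composition is the identity on $b^s_{p,q}(\rn)$.

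For the synthesis direction I would view the normalized wavelets $2^{-j(s - n/p)}\psi^j_{G,m}$ as smooth ($p$-)atoms for $\Bs(\rn)$ in the sense of atomic decompositions: by \eqref{2.17} each factor lies in $C^u$, by \eqref{2.21} they are supported in a cube of side $\sim 2^{-j}$ centred near $2^{-j}m$, and for $G\in G^j$ with $j\ge 1$ the presence of at least one $\psi_M$-factor yields, via \eqref{2.18}, moment cancellation up to order $u-1$. The atomic decomposition theorem for $\Bs(\rn)$ then produces the bound $\|f\,|\Bs(\rn)\| \lesssim \|\lambda\,|b^s_{p,q}(\rn)\|$ and unconditional convergence in $\SpRn$, with the smoothness requirement of the theorem guaranteed by $u>s$ and the moment-cancellation requirement by $u > \sigma^n_p - s$; the $j=0$ scaling-function terms (no moments) are absorbed because the coarse scale contributes only finitely many shifts per unit cube and carries negative smoothness index $-n/p$ in \eqref{2.27}.

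For the analysis direction the appropriate tool is the local means characterization of $\Bs(\rn)$: one has an equivalent quasi-norm
\[
\|f\,|\Bs(\rn)\| \sim \Big(\sum_{j\ge 0} 2^{j(s-\frac{n}{p})q} \sum_{G\in G^j}\Big(\sum_{m\in\zn} \big|2^{jn}(f,\psi^j_{G,m})\big|^p\Big)^{q/p}\Big)^{1/q}
\]
provided the kernels $\psi^j_{G,m}$ are $C^L$ with $L>s$ and, for $j\ge 1$, carry $N$ vanishing moments with $N > \sigma^n_p - s$. These are exactly the hypotheses secured by $u > \max(s,\sigma^n_p - s)$ in \eqref{2.29}. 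The right-hand side is literally $\|\lambda(f)\,|b^s_{p,q}(\rn)\|$, so the reverse inequality $\|\lambda(f)\,|b^s_{p,q}(\rn)\| \lesssim \|f\,|\Bs(\rn)\|$ is immediate.

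The main obstacle is matching the two threshold conditions on $u$ simultaneously and handling the coarse scale $j=0$ and the ``mixed'' $G$-patterns separately from the pure high-frequency wavelets; this is what forces the use of both parts of $\max(s,\sigma^n_p-s)$. Everything else reduces to a Littlewood--Paley computation using the dyadic resolution $\vp$ from \eqref{2.3}--\eqref{2.5}, combined with Plancherel--Polya-type estimates for entire functions of exponential type to convert discrete $\ell_p$-sums of sampled values into $L_p$-quasi-norms of the $(\vp_j\wh f)^\vee$. Since these ingredients are all well documented in \cite{T20}, I would essentially reduce the statement to \cite[Theorem~1.20]{T20} and then read off \eqref{2.30}--\eqref{2.32}.
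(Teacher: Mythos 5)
The paper itself gives no proof of this proposition: Remark~\ref{R2.6} simply quotes it from \cite[Proposition~1.11]{T20}, and your sketch is exactly the standard argument behind that citation (normalized wavelets as atoms for the synthesis bound, local means for the analysis bound, biorthogonality for uniqueness and the isomorphism), to which you in any case explicitly reduce via \cite{T20}. So the proposal is correct and takes essentially the same route as the paper's cited source.
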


\begin{remark}   \label{R2.6}
This coincides with the relevant part of \cite[Proposition 1.11, pp.\,9--10]{T20}. There one finds related references. 
\end{remark}

\begin{remark}   \label{R2.7}
The representation \eqref{2.30}, \eqref{2.31} can be extended to $f\in \SpRn$. This can be justified as follows. Let $w_\alpha (x) =
(1 + |x|^2)^{\alpha/2}$, $x\in \rn$, $\alpha \in \real$, be a so-called admissible weight. Let $s\in \real$ and $0< p,q \le \infty$.
Then the weighted Besov space $\Bs (\rn, w_\alpha)$ collects all $f\in \SpRn$ such that
\begin{\eq}   \label{2.33}
\| f \, |\Bs (\rn, w_\alpha) \| = \| w_\alpha f \, | \Bs (\rn) \|
\end{\eq}
is finite. For fixed $p$ and $q$ one has
\begin{\eq}   \label{2.34}
\SpRn = \bigcup_{\alpha,s \in \real} \Bs (\rn, w_\alpha)
\end{\eq}
according to \cite[Remark 2.91, p.\,74]{T20} with a reference to \cite{Kab08}. On the other hand one can extend the above proposition
to these weighted Besov spaces $\Bs (\rn, w_\alpha)$. We refer the reader to \cite[Theorem~6.15, pp.\,270--271]{T06}. This shows that
expansions of type \eqref{2.30}, \eqref{2.31} make sense for any $f\in \SpRn$.
\end{remark}

\section{Mapping properties}   \label{S3}
\subsection{Besov spaces}    \label{S3.1}
Let $A_1 (\rn)$ and $A_2 (\rn)$ be two function spaces as introduced in Definition~\ref{D2.1} and let $\Ft$ be again the above Fourier
transform. Then $\Ft: A_1 (\rn) \hra A_2 (\rn)$ means that $\Ft$ (better its restriction from $\SpRn$ to $A_1 (\rn))$ is a linear and
continuous mapping from $A_1 (\rn)$ into $A_2 (\rn)$. Otherwise we use the above notation, including the modifications in Definition 
\ref{D2.3} adapted to mapping properties of the Fourier transform.

\begin{theorem}   \label{T3.1}
Let $1 \le p \le \infty$ and $s_1 \in \real$, $s_2 \in \real$. Then there is a continuous mapping 
\begin{\eq}   \label{3.1}
\Ft: \quad \SBxp{s_1} (\rn) \hra \TBxp{s_2} (\rn)
\end{\eq}
if, and only if, both $s_1 \ge 0$ and $s_2 \ge 0$. This mapping is compact if, and only if, both $s_1 >0$ and $s_2 >0$.
\end{theorem}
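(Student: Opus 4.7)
I would split the biconditional into four implications and reduce, whenever possible, to endpoint cases at $p \in \{1, 2, \infty\}$ and transfer through duality and complex interpolation. A useful auxiliary is the notation-level duality $(X^s_p B)^* = Y^s_{p'}B$ and $(Y^s_p B)^* = X^s_{p'}B$, which follows from the identity $-x^n_p = y^n_{p'}$; it lets one always reduce to $p \leq 2$ when convenient.

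\emph{Continuity.} For sufficiency, since $s_1 \ge 0$ yields $X^{s_1}_p B \hookrightarrow X^0_p B$ and $s_2 \ge 0$ yields $Y^0_p B \hookrightarrow Y^{s_2}_p B$, the task reduces to the endpoint $\mathcal F: X^0_p B = B^{x^n_p}_p \to B^{y^n_p}_p = Y^0_p B$. I would pin this at three values: at $p=2$ it is Plancherel (both spaces collapse to $L_2$); at $p=1$ I expand $f \in B^n_1$ in Daubechies wavelets, so that $\|f|B^n_1\| \sim \sum_{j,G,m}|\lambda^{j,G}_m(f)|$ by Proposition \ref{P2.5}, and verify the uniform bound $\|\widehat{\psi^j_{G,m}}|B^0_1\| \lesssim 1$ using the explicit formula $\widehat{\psi^j_{G,m}}(\xi) = 2^{-jn}e^{-i 2^{-j}m\cdot \xi}\hat\psi_G(2^{-j}\xi)$ together with the rapid Schwartz decay and vanishing moments of $\hat\psi_G$; the $p=\infty$ case follows by duality, and complex interpolation along the $(1/p,s)$-segment handles $1<p<2$ and $2<p<\infty$, with the exponent $d^n_p = 2n(1/p-1/2)$ emerging automatically from linear interpolation. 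For necessity I test with translates $f_y(x)=\phi(x-y)$ of a Schwartz bump: the source norm is $y$-independent while $\mathcal F f_y = e^{-iy\cdot\xi}\hat\phi$ has $B^\sigma_p$-norm of order $(1+|y|)^\sigma$ for $\sigma > 0$, so boundedness forces $\sigma = y^n_p - s_2 \le 0$, hence $s_2 \ge 0$ for $p \le 2$. Modulations $f(x) = e^{i\xi_0\cdot x}\phi(x)$ give $s_1 \ge 0$ for $p \le 2$, and the duality identity closes the loop for $p > 2$.

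\emph{Compactness.} For sufficiency ($s_1, s_2 > 0$), I would factor $\mathcal F$ through a weighted Besov space. Using the Bessel identity $w_{s_1}\hat f = \mathcal F[(1-\Delta)^{s_1/2}f]$ and the continuity already established applied to $(1-\Delta)^{s_1/2}f \in X^0_p B$, one obtains $\mathcal F: X^{s_1}_p B \to B^{y^n_p}_p(\rn, w_{s_1})$ continuously. The weighted-to-unweighted embedding $B^{y^n_p}_p(\rn, w_{s_1}) \hookrightarrow B^{y^n_p - s_2}_p(\rn) = Y^{s_2}_p B$ is compact by a weighted Rellich-Kondrashov-type theorem (gain of smoothness $s_2 > 0$ and of weight $s_1 > 0$), making the composition $\mathcal F$ compact. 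For necessity, if $s_2 = 0$ and $p \le 2$, lacunary translates $f_k(x) = \phi(x - 2^k\vec e_1)$ yield $\mathcal F f_k = e^{-i2^k\xi_1}\hat\phi$ with $B^{y^n_p}_p$-mass concentrated in disjoint dyadic frequency annuli, so pairwise distances stay bounded below and no subsequence converges. For $p > 2$, and for $s_1 = 0$ in general, I would use scaled wavelets $f_k = c_k\psi^k_{G,0}$ (normalized in the source), whose Fourier images occupy disjoint dyadic annuli at scale $2^k$ and again admit no convergent subsequence; equivalently, the duality identity reduces these to the $p' \le 2$, $s_1 = 0$ case.

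\emph{Main obstacle.} The delicate step is compactness sufficiency: it hinges on having both the continuity of $\mathcal F$ into the weighted space $B^{y^n_p}_p(\rn, w_{s_1})$ and a weighted Rellich-Kondrashov compact-embedding theorem. These weighted-space ingredients---the ``symbiotic relationship'' the authors foreshadow for Section~\ref{S4}---are precisely what bridges the gap between Fourier boundedness and compactness on the unbounded domain $\rn$, which admits no compact embeddings between unweighted $B$-spaces.
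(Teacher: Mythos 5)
Your overall architecture is essentially the paper's, reflected at the other endpoint: you anchor the endpoint continuity at $p=1$ by wavelet synthesis, whereas the paper anchors at $p=\infty$ via the duality $B^0_1 (\rn)' = \Cc^0 (\rn)$; the computational core, the uniform bound $\| \wh{\psi^j_{G,m}} \, | B^0_1 (\rn) \| \le c$, is the same, and the interpolation of the sequence spaces $b^s_{p,p}(\rn)$ and the crossing of $p=2$ by $\Ft'=\Ft$ match Steps 1--3. The compactness half (factorization through $B^{\tnp}_p(\rn,w_{s_1})$ followed by a compact weighted embedding; separated families plus Schauder duality for non-compactness) is also the route the paper takes in Step 6 and, more explicitly, in Section~\ref{S4}; it is sound modulo routine care (pass to lacunary subsequences for the scaled wavelets, and use $\os{\circ}{\Cc}{}^s(\rn)$ rather than $\Cc^s(\rn)$ when dualizing at $p=\infty$, since $\Cc^s(\rn)'\neq B^{-s}_1(\rn)$).

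The genuine gap is in the necessity of continuity. For $1\le p<2$ the source space is $\SBxp{s_1}(\rn)=B^{\dnp+s_1}_p(\rn)$ with $\dnp>0$, and your modulation test gives
\begin{equation*}
\| e^{i\xi_0 x}\phi \, | B^{\dnp+s_1}_p (\rn) \| \sim |\xi_0|^{\dnp+s_1}, \qquad \| \Ft (e^{i\xi_0 x}\phi) \, | B^{-s_2}_p (\rn) \| \sim 1 ,
\end{equation*}
so boundedness only forces $\dnp+s_1\ge 0$, i.e. $s_1\ge -\dnp$, which is strictly weaker than $s_1\ge 0$. Your claim that modulations give $s_1\ge0$ for $p\le2$ is therefore false except at $p=2$. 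Duality does not repair this: the dual problem lives at $p'>2$ with $s_1$ and $s_2$ interchanged, and there your translation test returns only $s_2 \ge d^n_{p'}=-\dnp$ while modulations return $s_1\ge0$; combining the direct and dual tests one obtains, for every $p\neq2$, only $\max$-type information in one slot and the weaker bound $\ge -|\dnp|$ in the other, never both $s_1\ge0$ and $s_2\ge0$. The missing ingredient is a dilation (homogeneity) family: with $f_\lambda(x)=\phi(\lambda x)$, $\lambda\to\infty$, one has $\|f_\lambda \, | B^{\dnp+s_1}_p(\rn)\| \sim \lambda^{\dnp+s_1-\frac{n}{p}}$, while $\Ft f_\lambda = \lambda^{-n}\wh{\phi}(\cdot/\lambda)$ is band-limited to a ball of radius $O(1)$, so that $\|\Ft f_\lambda \, | B^{-s_2}_p(\rn)\| \sim \|\Ft f_\lambda\,|L_p(\rn)\| \sim \lambda^{-n/p'}$; boundedness then forces $-\frac{n}{p'} \le \dnp + s_1 - \frac{n}{p}$, which is exactly $s_1\ge0$ since $\dnp = \frac{n}{p}-\frac{n}{p'}$. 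This differential-dimension argument (compare Remark~\ref{R3.4}) is precisely what \cite[Corollary 3.3]{Tri22a} supplies and what the paper invokes in Step 6; without it your necessity proof only establishes $s_1\ge -|\dnp|$ (for $p<2$), respectively $s_2\ge -|\dnp|$ (for $p>2$).
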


\begin{proof}
{\em Step 1.} First we deal with $p=\infty$ and $s_1 = s_2 =0$. Then \eqref{3.1} reduces to
\begin{\eq}   \label{3.2}
\Ft: \quad \Cc^0 (\rn) \hra \Cc^{-n} (\rn),
\end{\eq}
where we used \eqref{2.9}. Let $f \in \Cc^0 (\rn)$. Then it follows from Proposition~\ref{P2.5} and Remark~\ref{R2.7} that $\wh{f} \in
\SpRn$ admits the wavelet expansion
\begin{\eq}   \label{3.3}
\wh{f}= \sum_{\substack{j\in \no, G\in G^j, \\ m\in \zn}}
 \lambda^{j,G}_m  (\wh{f}) \, \psi^j_{G,m} = \sum_{\substack{j\in \no, G\in G^j, \\ m\in \zn}} 2^{jn} (\wh{f}, \psi^j_{G,m}) \,
\psi^j_{G,m},
\end{\eq}
unconditional convergence being in $\SpRn$. We rely on the corresponding wavelet expansion for $g\in \Cc^s (\rn)$, $s\in \real$, with
\begin{\eq}  \label{3.4}
\| g \, |\Cc^s (\rn) \| \sim \sup_{\substack{j\in \no, G\in G^j, \\ m\in \zn}} 2^{j(s+n)} | (g, \psi^j_{G,m} )|.
\end{\eq}
Let $f\in \Cc^0 (\rn)$. Then it follows from the duality $B^0_1 (\rn)' = \Cc^0 (\rn)$ according to \cite[Theorem~2.11.2, p.\,178]{T83}
and $(\wh{f}, \psi^j_{G,m} ) = (f, \wh{\psi^j_{G,m}} )$ (what is, almost, the definition of $\Ft$ extended from $\SRn$ to $\SpRn$)
that
\begin{\eq}   \label{3.5}
\big| (\wh{f}, \psi^j_{G,m} ) \big| \le c \, \|  f\, |\Cc^0 (\rn)\| \cdot \| \wh{\psi^j_{G,m}} \, | B^0_1 (\rn) \|.
\end{\eq}
Inserted in \eqref{2.6} one has by the support properties of the functions involved,
\begin{\eq}   \label{3.6}
\| \wh{\psi^j_{G,m}} \, | B^0_1 (\rn) \| \sim \| \wh{\psi^j_{G,m}} \, |L_1 (\rn) \|.
\end{\eq}
Using
\begin{\eq}   \label{3.7}
\wh{\psi^j_{G,m}} (\xi) = 2^{-jn} e^{-i 2^{-j} m \xi} \,\wh{\psi_G} (2^{-j} \xi), \qquad \xi \in \rn,
\end{\eq}
(in obvious notation) one obtains that for  some $c>0$,
\begin{\eq}   \label{3.8}
\| \wh{\psi^j_{G,m}} \, | B^0_1 (\rn) \| \le c, \qquad j\in \no, \quad G\in G^j, \quad m\in \zn.
\end{\eq}
Now \eqref{3.2} follows from \eqref{3.4}, \eqref{3.5}.
\\

{\em Step 2.} We prove
\begin{\eq}   \label{3.9}
\Ft: \quad \SBxp{0} (\rn) = B^0_p (\rn) \hra B^{\dnp}_p (\rn) = \TBxp{0} (\rn), \qquad 2 \le p \le \infty,
\end{\eq}
by complex interpolation of \eqref{3.2},
\begin{\eq}   \label{3.10}
\Ft: \quad \SBxx{0}{\infty} (\rn) = \Cc^0 (\rn) \hra \Cc^{-n}(\rn) = \TBxx{0}{\infty} (\rn)
\end{\eq}
and
\begin{\eq}   \label{3.11}
\Ft: \quad \SBxx{0}{2} (\rn) = L_2 (\rn) \hra L_2 (\rn) = \TBxx{0}{2} (\rn),
\end{\eq}
where we used the notation as introduced in \eqref{2.13}, \eqref{2.14}, in particular $\dnp = 2n \big( \frac{1}{p} - \frac{1}{2} 
\big)$. The wavelet isomorphism $I$ in \eqref{2.32} shows that it is sufficient to interpolate the corresponding sequence spaces,
\begin{\eq}   \label{3.12}
\big[ b^0_{2,2} (\rn), b^0_{\infty, \infty} (\rn) \big]_\theta = 
b^0_{p,p} (\rn), \qquad \frac{1}{p} = \frac{1-\theta}{2}, \quad 2<p<\infty,
\end{\eq}
and
\begin{\eq}    \label{3.13}
\big[b^0_{2,2} (\rn), b^{-n}_{\infty, \infty} (\rn) \big]_\theta =
b^{\dnp}_{p,p} (\rn), \qquad \frac{1}{p} = \frac{1-\theta}{2}, \quad 2<p<\infty,
\end{\eq}
where $0<\theta <1$. One has in obvious notation
\begin{\eq}   \label{3.14}
b^0_{2,2} (\rn) = \ell_2 \big( 2^{- \frac{jn}{2}} \ell_2 \big),
\end{\eq}
\begin{\eq}   \label{3.15}
b^0_{\infty, \infty} (\rn) = {\ell_{\infty} \big(
\ell_\infty \big) } \quad \text{and} \quad b^{-n}_{\infty, \infty} (\rn) = \ell_\infty \big( 2^{-jn}
\ell_\infty \big).
\end{\eq}
We use the complex interpolation
\begin{\eq}   \label{3.16}
\big[ \ell_2 (A_j), \ell_\infty (B_j) \big]_\theta = \ell_p \big([A_j, B_j]_\theta \big), \quad \frac{1}{p} = \frac{1-\theta}{2},
\end{\eq}
according to \cite[Remark 1.18.1/2, (12), p.\,122]{T78}, where $\{A_j, B_j \}^\infty_{j=1}$ are interpolation couples, specified to
the above weighted sequence spaces. Then  one obtains \eqref{3.12}, \eqref{3.13} from \eqref{3.14}--\eqref{3.16} and
\begin{\eq}   \label{3.17}
\big[ 2^{-\frac{jn}{2}} \ell_2, \ell_\infty \big]_\theta = 2^{-\frac{jn}{p}} \ell_p,
\end{\eq}
\begin{\eq}   \label{3.18}
\big[ 2^{-\frac{jn}{2}} \ell_2, 2^{-jn} \ell_\infty \big]_\theta =  2^{\frac{jn}{2} (1-\theta) - jn} \ell_p =
 2^{j(\dnp - \frac{n}{p})} \ell_p,
\end{\eq}
where again $0<\theta <1$ and $\frac{1}{p} = \frac{1-\theta}{2}$. The interpolation property shows that \eqref{3.9} follows from
\eqref{3.10} and \eqref{3.11}.
\\

{\em Step 3.} We prove
\begin{\eq}   \label{3.19}
\Ft: \quad \SBxp{0} (\rn) = B^{\dnp}_p (\rn) \hra B^0_p (\rn) = \TBxp{0} (\rn), \qquad 1 \le p \le 2,
\end{\eq}
with $\dnp = 2n \big( \frac{1}{p} - \frac{1}{2} \big) \ge 0$ by duality. The Fourier transform $\Ft$ is self-dual, $\Ft' = \Ft$, in the dual pairing
$ \big( \SRn, \SpRn \big)$ and its restriction to related function spaces. One has according to \cite[Theorem~2.11.2, Remark 2.11.2/2, 
pp.\,178, 180]{T83} that
\begin{\eq}   \label{3.20}
B^s_p (\rn)' = B^{-s}_{p'} (\rn), \qquad 1 \le p <\infty, \quad \frac{1}{p} + \frac{1}{p'} =1, \quad s \in \real,
\end{\eq}
and
\begin{\eq}   \label{3.21}
\os{\circ}{\Cc}{}^s (\rn)' = B^{-s}_1 (\rn), \qquad s\in \real,
\end{\eq}
{where $\os{\circ}{\Cc}{}^s (\rn)$  denotes the completion of  $\SRn$ in ${\Cc}{}^s (\rn)$}.   
We rely on \eqref{3.9} and the modification 
\begin{\eq}   \label{3.22}
\Ft: \quad \os{\circ}{\Cc}{}^0 (\rn) \hra \os{\circ}{\Cc}{}^{-n} (\rn)
\end{\eq}
of \eqref{3.10}. With $\dnx{p'} = - \dnp$ one has
\begin{\eq}   \label{3.23}
\TBsp (\rn)'= B^{\dnp -s}_p (\rn)' = B^{\dnx{p'} +s}_{p'} (\rn) = \SBxx{s}{p'} (\rn),
\end{\eq}
$s\in \real$, $2 \le p <\infty$, complemented by a corresponding assertion for $p =\infty$ based on \eqref{3.22}. Then \eqref{3.19} follows from
$\Ft' = \Ft$, \eqref{3.9} and \eqref{3.20}--\eqref{3.23}.
\\

{\em Step 4.} Furthermore \eqref{3.9}, \eqref{3.19} and elementary embeddings show that $\Ft$ in \eqref{3.1} is continuous if both $s_1 \ge 0$ and 
$s_2 \ge 0$. We wish to show that this mapping is not compact if either $s_1 =0$ or $s_2 =0$ and deal first with
\begin{\eq}   \label{3.24}
\Ft: \quad \SBxp{s_1} (\rn) = B^{s_1}_p (\rn) \hra B^{\dnp}_p (\rn) = \TBxp{0} (\rn), \qquad 2 \le p \le \infty,
\end{\eq}
$s_1 \ge 0$, where again $\dnp = 2n \big(\frac{1}{p} - \frac{1}{2} \big)$. Since $\dnp - \frac{n}{p} = - \frac{n}{p'}$, $\frac{1}{p} + 
\frac{1}{p'} =1$, it follows from $B^{\dnp}_p (\rn) \hra \Cc^{- \frac{n}{p'}}(\rn)$ that it is sufficient to show that
\begin{\eq}   \label{3.25}
\Ft: \quad B^{s_1}_p (\rn) \hra \Cc^{- \frac{n}{p'}} (\rn), \qquad s_1 \ge 0, \quad 2\le p \le \infty,
\end{\eq}
is not compact. We rely on the same type of arguments as in \cite{Tri22a}. Let $\psi$ be a non-trivial  compactly supported $C^\infty$ function
in $\rn$ and $\psi_j (x) = \psi(2^{-j} x )$, $x\in \rn$, $j\in \nat$, such that 
\begin{\eq}   \label{3.26}
\psi_j(x) \cdot \vp_j (x) = \psi_j (x) \quad \text{and} \quad \supp \psi_j \cap \supp \vp_k = \emptyset \quad \text{if} \quad k\in \nat, \quad j \not= k,
\end{\eq}
where $\{\vp_j \}$ is a suitably chosen resolution of unity according to \eqref{2.3}--\eqref{2.5}. Let $f_j (x) = 2^{-\frac{jn}{p}} \psi_j (x)$.
Then
\begin{\eq}   \label{3.27}
\| f_j \,| W^m_p (\rn) \| = \sum_{|\alpha| \le m} \| \Dd^\alpha f_j \, | L_p (\rn) \| \sim 1, \qquad j\in \nat,
\end{\eq}
where $W^m_p (\rn)$, $m\in \nat$, $2\le p <\infty$, are the usual Sobolev spaces, complemented by $W^m_\infty (\rn) = C^m (\rn)$, the collection of
all bounded continuous  functions in $\rn$ having bounded classical derivatives  up to order $m$ inclusively. It follows from $W^m_p (\rn) \hra
B^{s_1}_p (\rn) \hra L_p (\rn)$, $0<s_1 <m$, $2 \le p \le \infty$, that
\begin{\eq}   \label{3.28}
\| f_j \, |B^{s_1}_p (\rn) \| \sim 1, \qquad j\in \nat, \quad s_1 >0, \quad 2 \le p \le \infty.
\end{\eq}
One has by \eqref{2.6},
\begin{\eq}   \label{3.29}
\begin{aligned}
\| \wh{f_j} \, | \Cc^{-\frac{n}{p'}} (\rn)\| &\sim \sup_{k\in \no, x\in \rn} 2^{-\frac{kn}{p'}} \big| (\vp_k f_j )^\vee (x) \big| \\
&\sim \sup_{x \in \rn} 2^{- \frac{jn}{p'} - \frac{jn}{p}} \big| \psi^\vee_j (x) \big| \sim 1,
\end{aligned}
\end{\eq}
and
\begin{\eq}   \label{3.30}
\| \wh{f_j} - \wh{f_l} \, | \Cc^{-\frac{n}{p'}} (\rn) \| \sim 1 \qquad \text{if} \quad j \not= l.
\end{\eq}
This shows that $\Ft$ in \eqref{3.25} is not compact. Next we justify that also
\begin{\eq}  \label{3.31}
\Ft: \quad \SBxp{0} (\rn) = B^0_p (\rn) \hra B^{\dnp - s_2}_p (\rn) = \TBxp{s_2} (\rn), \qquad 2 \le p \le \infty,
\end{\eq}
$s_2 \ge 0$, is not compact. Again we modify corresponding arguments in \cite{Tri22a}. It follows by embedding that it is sufficient 
to show that
\begin{\eq}   \label{3.32}
\Ft: \quad B^0_p (\rn) \hra \Cc^\sigma (\rn), \qquad \sigma \le - n,
\end{\eq}
is not compact. Let $\psi = \psi_F$ be a smooth compactly supported father wavelet according to \eqref{2.17} and let $\psi_m (x) = \psi
(x-m)$, $x\in \rn$, $m\in \zn$. Let
\begin{\eq}   \label{3.33}
f_m (x) = \big( \Fti \psi_m \big) (x) = e^{imx} \Fti \psi(x), \qquad x\in \rn, \quad m\in \zn.
\end{\eq}
Then one has  by Proposition~\ref{P2.5} that $\{\Ft f_m = \psi_m: \, m\in \zn \}$ is not compact in any space $\Cc^\sigma (\rn)$. We
insert $f_m$ in \eqref{2.6} with $\Bs (\rn) = B^0_p (\rn)$. Then
\begin{\eq}   \label{3.34}
\big( \vp_j \wh{f_m} \big)^\vee (x) = (\vp_j \psi_m )^\vee (x) =0
\end{\eq}
with exception of the related terms with $|m| \sim 2^j$. We may assume $\vp_j \psi_m = \psi_m$. Then one has
\begin{\eq}   \label{3.35}
\| (\vp_j \psi_m)^\vee | L_p (\rn) \| = \| e^{imx} \psi^\vee | L_p (\rn) \| = \| \psi^\vee |L_p (\rn) \| 
\end{\eq}
and
\begin{\eq}   \label{3.36}
\| f_m \, | B^0_p (\rn) \| \sim 1, \qquad m\in \zn, \quad 2 \le p \le \infty.
\end{\eq}
This shows that $\Ft$ in \eqref{3.31} is not compact. By \eqref{3.22} it follows that also
\begin{\eq}  \label{3.37}
\Ft: \quad \os{\circ}{\Cc}{}^0 (\rn) \hra \os{\circ}{\Cc}{}^\sigma (\rn), \qquad \sigma \le -n,
\end{\eq}
is not compact.
\\

{\em Step 5.} As far as the non-compactness of the counterparts both of \eqref{3.24} and \eqref{3.31} for $1\le p \le 2$ are concerned
we rely on the well-known assertion that the linear and continuous mapping $T: A \hra B$ between the Banach spaces $A$ and $B$ is
compact if, and only if, the dual operator $T': B' \hra A'$ is compact, \cite[Theorem~4.19, p.\,105]{Rud91}. Then it follows from $\Ft'
= \Ft$, \eqref{3.20}--\eqref{3.23} and \eqref{3.37} that the above assertion about non-compactness can be transferred form $2\le p \le
\infty$ to $1\le p \le 2$. 
\\

{\em Step 6.} As far as the remaining assertions of the above theorem are concerned we can rely on what has already been proved in
\cite{Tri22a}. One has by \cite[Theorem~4.8, Remark 4.9, pp.\,146, 148]{Tri22a} that
\begin{\eq}   \label{3.38}
\Ft: \quad \SBxp{s_1} (\rn) \hra \TBxp{s_2} (\rn), \qquad 1 \le p \le 2,
\end{\eq}
is compact if both $s_1 >0$ and $s_2 >0$. The above duality argument based on \eqref{3.20} extends this assertion to $2\le p \le
\infty$ (including now $p= \infty$). Finally, if either $s_1 <0$ or $s_2 <0$, then there is no continuous mapping \eqref{3.1}. This can
be justified in the same way as in \cite[Corollary 3.3,  p.\,142]{Tri22a}, now including $p=1$ and $p= \infty$. 
\end{proof}

\begin{remark}    \label{R3.l2}
In Step~6 we relied on compactness assertions proved in \cite{Tri22a} based there on corresponding properties for compact embeddings
between weighted function spaces. One may ask for new proofs in the framework of the above tools, wavelet expansions, interpolation,
and duality. But this is not so clear. We comment  on this problem at the beginning  of Section~\ref{S4.2} where we deal with
approximation numbers.
\end{remark}

\subsection{Sobolev spaces}    \label{S3.2}
The only paper known to us which fits in the above scheme is \cite{Ryd20}. It is one of the main aims of this paper to justify the
continuous mapping
\begin{\eq}   \label{3.39}
\Ft: \quad L_p (\rn) \hra H^{\dnp}_p (\rn), \qquad 2<p<\infty,
\end{\eq}
where $H^s_p (\rn)$ are the Sobolev spaces according to \eqref{2.10}, \eqref{2.11} and again $\dnp= 2n \big( \frac{1}{p} - 
\frac{1}{2} \big)$. We give a short new proof as a first step of the counterpart of Theorem~\ref{T3.1} with $H$ in place of $B$. We 
use the notation as introduced in Definition~\ref{D2.3}. Then \eqref{3.39} can be rewritten as
\begin{\eq}   \label{3.40}
\Ft: \quad \SHxp{0} (\rn) = L_p (\rn) \hra H^{\dnp}_p (\rn) = \THxp{0} (\rn), \qquad 2<p<\infty.
\end{\eq}

\begin{theorem}   \label{T3.3}
Let $1<p<\infty$ and $s_1 \in \real$, $s_2 \in \real$. Then there is a continuous mapping
\begin{\eq}   \label{3.41}
\Ft: \quad \SHxp{s_1} (\rn) \hra \THxp{s_2}(\rn)
\end{\eq}
if, and only if, both $s_1 \ge 0$ and $s_2 \ge 0$. This mapping is compact if, and only if,  both $s_1 >0$ and $s_2 >0$.
\end{theorem}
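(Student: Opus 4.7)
The plan is to parallel the proof of Theorem~\ref{T3.1}, resting on three pillars: (i) the coincidence $\SHxx{s}{2}(\rn) = \SBxx{s}{2}(\rn)$ and $\THxx{s}{2}(\rn) = \TBxx{s}{2}(\rn)$ from \eqref{2.16a}, which reduces the case $p=2$ to Theorem~\ref{T3.1}; (ii) the base case \eqref{3.40}, i.e., $\Ft: L_p(\rn) \hra H^{\dnp}_p(\rn)$ for $2<p<\infty$, which is the main new input; and (iii) the self-duality $\Ft'=\Ft$ together with $H^s_p(\rn)'=H^{-s}_{p'}(\rn)$, which transfers statements from $p>2$ to $1<p<2$.

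After settling the case $p=2$ by Theorem~\ref{T3.1}, the key task is to establish \eqref{3.40}. \textbf{This is the main obstacle.} For a short proof within the wavelet framework of Section~\ref{S2.2}, in the spirit of Step~1 of the proof of Theorem~\ref{T3.1} but now in the $F$-scale, I would expand $\Ft f$ via Daubechies wavelets to obtain $\lambda^{j,G}_m(\Ft f) = 2^{jn}(f,\wh{\psi^j_{G,m}})$, and then rely on the scaling \eqref{3.7} of $\wh{\psi^j_{G,m}}$. Duality $(L_p)'=L_{p'}$ applied to the pairing $(f,\wh{\psi^j_{G,m}})$, combined with the direct computation $\|\wh{\psi^j_{G,m}}\,|L_{p'}(\rn)\| \sim 2^{-jn/p}$ coming from \eqref{3.7}, yields the scale-by-scale bound $|\lambda^{j,G}_m(\Ft f)| \lesssim 2^{jn/p'} \|f\,|L_p(\rn)\|$. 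Using $H^{\dnp}_p(\rn) = F^{\dnp}_{p,2}(\rn)$ and its Triebel--Lizorkin sequence-space characterisation, the weight $2^{j\dnp}$ in the target balances against this scaling through the identity $\dnp = 2n(1/p-1/2)$; an additional Hausdorff--Young argument in $m\in\zn$ at each fixed scale (using that the wavelet coefficients are essentially the Fourier transform of a localised version of $f$ sampled on a lattice) is needed to reassemble the scale-by-scale estimates into the full $L_p$-norm.

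Once \eqref{3.40} is available, the case $1<p<2$ follows by dualising $\Ft: L_p(\rn)\hra H^{\dnp}_p(\rn)$ to $\Ft: H^{-\dnp}_{p'}(\rn) = H^{\dnx{p'}}_{p'}(\rn)\hra L_{p'}(\rn)$. The extension to arbitrary $s_1,s_2\ge 0$ is then immediate by sandwiching \eqref{3.41} between the Sobolev embeddings $\SHxp{s_1} = H^{\snp+s_1}_p(\rn) \hra H^{\snp}_p(\rn) = \SHxp{0}$ and $\THxp{0} = H^{\tnp}_p(\rn) \hra H^{\tnp-s_2}_p(\rn) = \THxp{s_2}$ and applying the base case. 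For $s_1,s_2>0$, compactness of \eqref{3.41} is obtained by factoring $\Ft$ through the compact Besov mapping of Theorem~\ref{T3.1}, namely $\SHxp{s_1}\hra\SBxp{s_1/2} \os{\Ft}{\to} \TBxp{s_2/2} \hra \THxp{s_2}$, where both outer embeddings hold by the strict-smoothness embeddings $F^{\sigma}_{p,2}(\rn)\hra F^{\sigma'}_{p,p}(\rn)$ valid for $\sigma>\sigma'$.

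Finally, non-continuity when $s_1<0$ or $s_2<0$ and non-compactness when $s_1=0$ or $s_2=0$ (with both $\ge 0$) are obtained by adapting the test functions of Steps~4--6 of the proof of Theorem~\ref{T3.1}, whose smoothness and support properties ensure that the norm bounds and separation properties in the $B$-scale transfer to the $H$-scale through the embeddings $B^s_p(\rn)\hra H^s_p(\rn)$ (for $1<p\le 2$) and $H^s_p(\rn)\hra B^s_p(\rn)$ (for $2\le p<\infty$); in the residual cases one falls back on the weighted-space arguments of \cite[Corollary 3.3]{Tri22a}.
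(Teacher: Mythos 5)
There is a genuine gap at the heart of your argument, and it sits exactly where you yourself flag the ``main obstacle'': the base case \eqref{3.40}. Everything else in your outline (reduction of $p=2$ to Theorem~\ref{T3.1} via \eqref{2.16a}, the elementary embeddings giving continuity for all $s_1,s_2\ge 0$ once the case $s_1=s_2=0$ is known, dualising with $\Ft'=\Ft$ and $H^\sigma_p(\rn)'=H^{-\sigma}_{p'}(\rn)$, the factorisation $\SHxp{s_1}\hra\SBxp{s_1/2}\os{\Ft}{\to}\TBxp{s_2/2}\hra\THxp{s_2}$ for compactness, and the transfer of the non-compactness and non-continuity arguments of Steps~4--6 of Theorem~\ref{T3.1} through $B^\sigma_{p,\min(p,2)}(\rn)\hra H^\sigma_p(\rn)\hra B^\sigma_{p,\max(p,2)}(\rn)$) is sound and essentially parallels the paper. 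But the wavelet sketch you offer for \eqref{3.40} does not prove it. The coefficient bound you derive, $|\lambda^{j,G}_m(\Ft f)|\lesssim 2^{jn/p'}\|f\,|L_p(\rn)\|$, is a sup-in-$m$ estimate; plugged into the sequence-space norms it only yields $\Ft f\in\Cc^{-n/p'}(\rn)=B^{\,\dnp-\frac np}_{\infty,\infty}(\rn)$, i.e.\ precisely the $p=\infty$ endpoint already treated in Step~1 of Theorem~\ref{T3.1}, and nothing about $\ell_p$-summability in $m$ at each scale, let alone the $F^{\dnp}_{p,2}$-structure of $H^{\dnp}_p(\rn)$. The phrase ``an additional Hausdorff--Young argument in $m\in\zn$ at each fixed scale \ldots is needed'' is exactly the analytic content of the theorem, not a routine supplement: the functions $\wh{\psi^j_{G,m}}$ at fixed $j$ are modulations $e^{-i2^{-j}m\xi}$ of a fixed profile, so what you would have to prove is a weighted Fourier (Pitt-type) inequality, and no argument for it is given. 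Note also that the paper's Proposition~\ref{P2.5} provides the wavelet isomorphism only for the $B$-scale, so even the reduction to sequence spaces in the $F$-scale would require machinery not set up here.

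For contrast, the paper's route to the base case is entirely different and much shorter: for $1<p\le 2$ it invokes the known weighted Hausdorff--Young inequality \eqref{3.43} of Bergh--L\"ofstr\"om, rewrites it via the homogeneous lift and the characterisation \eqref{2.12} as \eqref{3.44}, which is exactly $\Ft: H^{\dnp}_p(\rn)\hra L_p(\rn)$, and then obtains your \eqref{3.40} for $2<p<\infty$ by the duality $\Ft'=\Ft$, \eqref{3.45}. If you want to keep your wavelet-based strategy you must either actually prove the lattice Hausdorff--Young/Pitt estimate you allude to (which amounts to reproving \eqref{3.43}) or simply cite such an inequality, as the paper does; as written, the central step of your proof is asserted rather than established.
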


\begin{proof}
{\em Step 1.} First we prove
\begin{\eq}   \label{3.42}
\Ft: \quad \SHxp{0} (\rn) = H^{\dnp}_p (\rn) \hra L_p (\rn) = \THxp{0}(\rn), \qquad 1<p \le 2,
\end{\eq}
where again $\dnp = 2n \big( \frac{1}{p} - \frac{1}{2} \big)$. One has according  to \cite[Theorem~1.4.1, p.\,11]{BeL76}
\begin{\eq}    \label{3.43}
\Big( \int_{\rn} | \wh{f} (x)|^p |x|^{n(p-2)} \, \di x \Big)^{1/p} \le c \, \|f \, | L_p (\rn) \|, \qquad 1<p \le 2,
\end{\eq}
(correcting $1\le p \le 2$ there into $1<p \le 2$). Inserting $\big( |\xi|^{\dnp} \wh{f} \big)^\vee$ in the understanding as already
discussed in connection with \eqref{2.12} with a reference to \cite[Proposition 4.1, p.\,135]{T13} one obtains
\begin{\eq}   \label{3.44}
\Big( \int_{\rn} |\wh{f} (x)|^p \, \di x \Big)^{1/p} \le c \, \big\| \big( |\xi|^{\dnp} \wh{f}\,\big)^\vee | L_p (\rn) \big\|.
\end{\eq}
Then \eqref{3.42} follows from \eqref{2.12}. Afterwards \eqref{3.40} is a matter of duality, $\Ft' =\Ft$,
\begin{\eq}   \label{3.45}
H^{\dnp}_p (\rn)' = H^{\dnx{p'}}_{p'} (\rn), \quad L_p (\rn)' = L_{p'} (\rn), \quad \dnx{p'} = - \dnp,
\end{\eq}
where $1<p<2$ and $\frac{1}{p} + \frac{1}{p'} =1$. Here the well-known duality $H^\sigma_p (\rn)' = H^{-\sigma}_{p'} (\rn)$, $1<p<
\infty$, $\sigma \in \real$, is a special case of \cite[Theorem~2.11.2, p.\,178]{T83} with $H^\sigma_p (\rn) = F^\sigma_{p,2} (\rn)$,
$1<p<\infty$. It is also covered by \cite[Theorem~2.6.1, pp.\,198--199]{T78}. This proves the continuous mapping
\begin{\eq}   \label{3.46}
\Ft: \quad \SHxp{0} (\rn) \hra \THxp{0}(\rn), \qquad 1<p<\infty.
\end{\eq}

{\em Step 2.} The remaining assertions of the above theorem can now be proved in the same way as in the Steps~4-6 of the proof of
Theorem~\ref{T3.1}.
\end{proof}

\begin{remark}   \label{R3.4}
Let $1<p<2$, $\frac{1}{p} + \frac{1}{p'} =1$, and again $\dnp = \frac{2n}{p} -n >0$. Then \eqref{3.44} combined with the 
Hausdorff-Young inequality \eqref{1.3} shows that
\begin{\eq}   \label{3.47}
\begin{aligned}
\|f \, | L_{p'} (\rn) \| & \le c \, \| \wh{f} \, | L_p (\rn) \| \\
&\le c' \, \big\| (|\xi|^{\dnp} \wh{f} )^\vee | L_p (\rn) \big\|  \\
&\le c'' \| f \, | H^{\dnp}_p (\rn) \|, \qquad f \in H^{\dnp}_p (\rn).
\end{aligned}
\end{\eq}
The spaces $L_{p'} (\rn)$ and $H^{\dnp}_p (\rn)$ have the same so-called differential dimension $-\frac{n}{p'} = \dnp - \frac{n}{p}
$: If one replaces $f(\cdot)$ by $f(\lambda \cdot)$, $\lambda >0$, then it comes out that the first three terms in \eqref{3.47} have
the same homogeneity $\lambda^{- \frac{n}{p'}}$. This shows that \eqref{3.47} is a refinement of the related limiting embedding
\begin{\eq}   \label{3.48}
\id: \quad H^{\dnp}_p (\rn) \hra L_{p'} (\rn), \qquad 1<p<2.
\end{\eq}
\end{remark}

\subsection{Weighted spaces and weighted  inequalities}
The elementary properties of the Fourier transform  show that it converts smoothness into weights and vice versa. This suggests  to extend some assertions about mappings between the unweighted spaces  to their weighted counterparts. Here we recall inequalities of Hausdorff-Young type  for the so--called admissible weights
\begin{\eq}   \label{w1}
	w_\alpha (x) = (1 + |x|^2)^{\alpha/2}, \qquad \alpha \in \real, \quad x \in \rn,
\end{\eq}
that will be needed later on. The inequalities were  proved in \cite{Tri23}. We recall their proof for reader's convenience. 

Let $L_p (\rn, w_\alpha)$ with $0<p \le \infty$ and $\alpha \in \real$ be the complex quasi--Banach space quasi--normed by
\begin{\eq}   \label{w2}
	\| f \, |L_p (\rn, w_\alpha) \| = \| w_\alpha f \, | L_p (\rn) \| . 
\end{\eq}
 Then $F^s_{p,q} (\rn, w_\alpha)$  (resp. $B^s_{p,q} (\rn, w_\alpha)$) is the collection of all $f\in \SpRn$ such that \red{\eqref{2.7}}, (resp. \red{\eqref{2.6}}) with $L_p (\rn, w_\alpha)$ in place of $L_p (\rn)$ is finite. They are again quasi--Banach spaces which are independent of the chosen resolution of unity according to 
\eqref{2.3}--\eqref{2.5}. The theory of these peculiar weighted spaces
goes back to \cite[Section 4.2]{ET96} based, in turn, on \cite{HaT94}. These spaces have some remarkable properties. In particular
$f \mapsto w_\alpha f$ is an isomorphic mapping for all spaces $\As (\rn, w_\alpha)$ onto $\As (\rn)$,
\begin{\eq}   \label{w3}
	\|w_\alpha f \, | \As (\rn) \| \sim \|f \, | \As (\rn, w_\alpha) \|,
\end{\eq}
where $\As$ stands for $\Bs$ or $\Fs$. 
This shows that many properties for the unweighted spaces can be transferred to their weighted counterparts. 

\begin{theorem}   \label{T3.5new}
  \bli
  \item Let $2\le p \le \infty$, $\frac{1}{p} + \frac{1}{p'} =1$, $s\in \real$ and $\sigma \in \real$. Then $\Ft$,
	\begin{\eq}   \label{T3.5_1}
		\Ft: \quad B^s_{p'} (\rn, w_\sigma) \hra B^\sigma_p (\rn, w_s),
	\end{\eq}
	is a continuous, but not compact mapping.
\item Let $2\le p <\infty$, $\frac{1}{p} + \frac{1}{p'} =1$, $s\in \real$ and $\sigma \in \real$. Then $\Ft$,
	\begin{\eq}   \label{T3.5_2}
		\Ft: \quad H^s_{p'} (\rn, w_\sigma) \hra H^\sigma_p (\rn, w_s),
	\end{\eq}
	is a continuous, but not compact mapping.
\eli
      \end{theorem}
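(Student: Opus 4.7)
The plan is to handle continuity in (i) and (ii) by interpolation between endpoints where the mapping is transparent, and to handle non-compactness by an explicit construction of translated modulated bumps. Although the isomorphism $f\mapsto w_\alpha f$ from \eqref{w3} reduces most unweighted questions to weighted ones, it does not commute with $\Ft$, so the continuity assertion is genuinely two-parameter (both $s$ and $\sigma$ play a role) and cannot be obtained by a single reduction.

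The natural endpoint is $p=p'=2$, where $B^s_2(\rn,w_\sigma)=H^s(\rn,w_\sigma)$ and Plancherel does all the work. For integer $s,\sigma\in\no$ one has the equivalent norm
$$\|f\,|H^s(\rn,w_\sigma)\|^2\sim \sum_{|\alpha|\le\sigma,\;|\beta|\le s}\|x^\alpha\partial^\beta f\,|L_2(\rn)\|^2,$$
Plancherel converts each summand $\|x^\alpha\partial^\beta f\|_{L_2}$ into $\|\partial^\alpha_\xi\xi^\beta\wh f\|_{L_2}$, and the resulting expression is equivalent to $\|\wh f\,|H^\sigma(\rn,w_s)\|^2$; extension to real $s,\sigma$ is by complex interpolation in the two smoothness parameters. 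This handles $p=2$ for both (i) and (ii) simultaneously, using $B^s_2=H^s_2=H^s$.

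For the remaining $p$ in part (i), the opposite endpoint $p=\infty$, $p'=1$ is reached by adapting Step 1 of the proof of Theorem~\ref{T3.1} to the weighted setting. One expresses $f\in B^s_1(\rn,w_\sigma)$ by its Daubechies wavelet expansion, rewrites the target norm through the weighted wavelet characterization of $\Cc^\sigma(\rn,w_s)$, and estimates the pairing $(\wh f,\psi^j_{G,m})=(f,\wh{\psi^j_{G,m}})$ using the weighted duality $B^s_1(\rn,w_\sigma)'=\Cc^{-s}(\rn,w_{-\sigma})$; the explicit scaling formula \eqref{3.7} yields uniform control of $\|\wh{\psi^j_{G,m}}\,|\Cc^{-s}(\rn,w_{-\sigma})\|$ with the correct weight factors at $2^{-j}m$. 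Complex interpolation on the wavelet side, as in \eqref{3.16}--\eqref{3.18} but with weighted sequence spaces, then fills in all $2\le p\le\infty$. For part (ii) the endpoint $p=\infty$ is forbidden, so instead one invokes the weighted Hausdorff-Young inequality of \cite{Tri23} (the analogue for $H$-spaces of \eqref{3.44}, combined with \eqref{w3}) to obtain continuity for $1<p'\le 2$, and then transfers to $2\le p<\infty$ by the self-duality $\Ft'=\Ft$ together with the $H$-space duality $H^\sigma_p(\rn,w_\alpha)'=H^{-\sigma}_{p'}(\rn,w_{-\alpha})$.

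Non-compactness in both (i) and (ii) is the same translation-modulation argument. Fix nonzero $\phi\in\SRn$ with $\wh\phi$ supported in a small ball around the origin and put
$$f_N(x)=c_N\, e^{iNe_1\cdot x}\,\phi(x),\qquad \wh{f_N}(\xi)=c_N\,\wh\phi(\xi-Ne_1),$$
with $c_N$ chosen so that $\|f_N\,|\text{source}\|\sim 1$. On the support of $f_N$ the weight $w_\sigma$ is essentially constant, and on the support of $\wh{f_N}$ one has $w_s(\xi)\sim N^s$, so a direct computation using the wavelet or Littlewood-Paley characterization gives $\|\wh{f_N}\,|\text{target}\|\sim 1$. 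Since the supports of $\wh{f_N}$ escape to infinity, no subsequence of $\{\wh{f_N}\}$ can converge in the target, ruling out compactness. The main technical obstacle is Step 2 above: the bookkeeping of the weight $w_\sigma$ through the wavelet system and the dual pairing must be done carefully so that the scaling factors $w_s(2^{-j}m)$ and $w_{-\sigma}(2^{-j}m)$ combine correctly, which is where the symbiosis between smoothness and weight under $\Ft$ really has to be exploited.
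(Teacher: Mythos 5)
Your non-compactness argument is essentially fine and close in spirit to the paper's (which first strips the weights off with isomorphisms and then uses $f_m=\Fti\psi_m$, so that $\Ft f_m=\psi_m$ are translated wavelets that cannot accumulate, cf. \eqref{w15}--\eqref{2.208}; your modulated bumps with $c_N\sim N^{-s}$ do the same directly in the weighted spaces, and you only still need the routine uniform lower bound for $\|\wh{f_N}-\wh{f_M}\|$). The continuity part, however, has genuine gaps, and they originate in your opening premise that the weighted statement ``cannot be obtained by a single reduction''. That premise is false: the whole point of introducing the lift $I_\gamma$ of \eqref{w5} alongside the multiplication $W_\beta$ is the commutation identity \eqref{w7}, $\Ft = W_{-\gamma}\circ I_{-\beta}\circ \Ft\circ W_\beta\circ I_\gamma$, which says that conjugating $\Ft$ by a multiplication turns it into a lift and vice versa. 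Via the isomorphisms \eqref{w4}, \eqref{w6} as in \eqref{w11}, this reduces the two-parameter weighted assertion in one stroke to the single unweighted mapping $\Ft: B^0_{p'}(\rn)\hra B^0_p(\rn)$ (resp. $L_{p'}(\rn)\hra L_p(\rn)$), i.e.\ to the classical Hausdorff--Young inequality sandwiched between the elementary embeddings \eqref{w8}. Because you discard this reduction, your part (ii) is in effect not proved: ``invoking the weighted Hausdorff--Young inequality of \cite{Tri23}'' is invoking precisely the statement under proof (Theorem \ref{T3.5new} \emph{is} that inequality, recalled from \cite{Tri23}), while the proposed substitute ``\eqref{3.44} combined with \eqref{w3}'' cannot suffice on its own --- by your own observation $W_\sigma$ does not commute with $\Ft$, and without the lifts there is no mechanism converting the source weight $w_\sigma$ into the target smoothness $\sigma$. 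Moreover the concluding ``transfer to $2\le p<\infty$ by duality'' is vacuous: $1<p'\le 2$ and $2\le p<\infty$ parametrize the same assertion, and dualizing $\Ft: H^s_{p'}(\rn,w_\sigma)\hra H^\sigma_p(\rn,w_s)$ merely reproduces the family with $(s,\sigma)$ replaced by $(-\sigma,-s)$.

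Part (i) is also unfinished at both endpoints of your interpolation. At $p=2$, the Plancherel computation plus interpolation of nonnegative integer parameters only reaches the closed quadrant $s\ge0$, $\sigma\ge0$; negative or mixed-sign $(s,\sigma)$ are not obtained (you would need a duality step in addition), and the asserted complex interpolation formula with simultaneously varying smoothness and weight is itself a nontrivial claim, not something you may simply quote. At $p'=1$, the crucial uniform estimate $2^{j(\sigma+n)}\,w_s(2^{-j}m)\,\big\|\wh{\psi^j_{G,m}}\,\big|\,\Cc^{-s}(\rn,w_{-\sigma})\big\|\le C$ --- which you yourself flag as ``the main technical obstacle'' --- is exactly where the proof has to be carried out and is left undone; it requires the moment and decay properties of $\wh{\psi_G}$, careful treatment of the modulation $e^{-i2^{-j}m\xi}$ for both signs of $s$, and $u$ large in dependence on $s,\sigma$. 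So while your architecture (two endpoints plus interpolation in $p$, weighted wavelet duality at $p'=1$) could probably be pushed through for (i), it replaces the paper's two-line reduction by several unproved lemmas, and (ii) as written is circular or incomplete.
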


To prove the above theorem   we need two operators acting between (un-) weighted Sobolev and Besov spaces: the multiplication operator $W_\beta$ and the lift $I_\gamma$. The multiplication $W_\beta$ is an operator defined in the following way  
	\begin{\eq}    \label{w_mult}
		W_\beta: \quad f \mapsto w_\beta f, \qquad f\in \SpRn, \quad \beta \in \real .
	\end{\eq}
	It  generates the isomorphic mapping
	\begin{\eq}   \label{w4}
		W_\beta \As (\rn, w_\alpha) = \As (\rn, w_{\alpha - \beta} ). 
	\end{\eq}
    Similarly, the lift $I_\gamma$ is defined by 
	\begin{\eq}  \label{w5}
		I_\gamma: \quad f \mapsto (w_\gamma \wh{f})^\vee = (w_\gamma f^\vee)^\wedge, \qquad f\in \SpRn, \quad \gamma \in \real .
	\end{\eq}
	It  generates the isomorphic mapping
	\begin{\eq}   \label{w6}
		I_\gamma \As (\rn, w_\alpha ) = A^{s-\gamma}_{p,q} (\rn, w_\alpha) .
	\end{\eq}
	 Using these operators one can easily prove the following formula,
	\begin{\eq}   \label{w7}
		\Ft = W_{-\gamma} \circ I_{-\beta} \circ \Ft \circ W_\beta \circ I_\gamma \qquad \text{in} \quad \SpRn.
	\end{\eq}
\begin{proof}
	{\em Step 1.} 
	Let $2\le p\le \infty$ and $\frac{1}{p} + \frac{1}{p'} =1$. Then one has by the well--known embeddings according to \cite[Proposition
2.3.2/2, p.\,47, 2.5.7, p.\,89]{T83} that
\begin{\eq}   \label{w8}
	L_p (\rn) \hra B^0_p (\rn), \qquad B^0_{p'} (\rn) \hra L_{p'} (\rn),
\end{\eq}
and by the Hausdorff--Young inequality  that
\begin{\eq}    \label{w10}
	\Ft: \quad B^0_{p'} (\rn) \os{\id}{\hra}  L_{p'} (\rn) \os{\Ft}{\hra} L_p (\rn) \os{\id}{\hra} B^0_p (\rn).
\end{\eq}  
	We prove that $\Ft$ in \eqref{T3.5_1} is continuous by reduction to \eqref{w10}, based on \eqref{w4}, \eqref{w6},
	\begin{\eq}   \label{w11}
		\begin{aligned}
			I_s : \quad & B^s_{p'} (\rn, w_\sigma) && \hra B^0_{p'} (\rn, w_\sigma), \\
			W_\sigma : \quad & B^0_{p'} (\rn, w_\sigma) && \hra B^0_{p'} (\rn), \\
			\Ft : \quad & B^0_{p'} (\rn) && \hra B^0_p (\rn), \\
			I_{-\sigma}: \quad & B^0_p (\rn) && \hra B^\sigma_p (\rn), \\
			W_{-s}: \quad & B^\sigma_p (\rn) && \hra B^\sigma_p (\rn, w_s).
		\end{aligned}
	\end{\eq}
	Then \eqref{T3.5_1} follows from \eqref{w7}. The proof of \eqref{T3.5_2} follows the same scheme, relying on the Hausdorff-Young inequality,
	$H^0_{p'} (\rn) = L_{p'} (\rn)$ and again \eqref{w4}, \eqref{w6}.\\
	
	{\em Step 2.} We prove that the mappings in \eqref{T3.5_1} and \eqref{T3.5_2} are not compact. The mappings $I_s$, $I_{-\sigma}$ and
	$W_\sigma$, $W_{-s}$ in \eqref{w11} and in its $H$--counterpart are isomorphisms. In other words, one has to show that $\Ft$ in
	\begin{\eq}   \label{w12}
		\Ft: \quad B^0_{p'} (\rn) \hra B^0_p (\rn)
	\end{\eq}
	and in 
	\begin{\eq}   \label{w13}
		\Ft: \quad L_{p'} (\rn) \hra L_p (\rn), 
	\end{\eq}
    are not compact. 
    We rely on the same arguments as in Step 4 of the proof of Theorem~\ref{T3.1}. 
    Let $\psi = \psi_F$ be a smooth compactly supported father wavelet according to \eqref{2.17} and let $\psi_m (x) = \psi (x-m)$, $x\in \rn$,
	$m\in \zn$. Let
	\begin{\eq}   \label{w15}
		f_m (x) = \big(\Fti \psi_m \big)(x) = e^{imx} \Fti \psi, \qquad x\in \rn, \quad m\in \zn.
	\end{\eq}
	Then one has 
	\begin{\eq}   \label{w16}
		\big( \vp_j \wh{f_m} \big)^\vee (x) = (\vp_j \psi_m )^\vee (x) =0
	\end{\eq}
	with the possible  exception of terms with $|m| \sim 2^j$. We may assume $\vp_j \psi_m = \psi_m$. Then one has
	\begin{\eq}   \label{w17}
		\| (\vp_j \psi_m)^\vee | L_{p'} (\rn) \| = \| e^{imx} \psi^\vee | L_{p'} (\rn) \| = \| \psi^\vee |L_{p'} (\rn) \| 
	\end{\eq}
	and
	%
	\begin{\eq}   \label{2.208}
		\| f_m \, | B^0_{p'} (\rn) \| \sim \|f_m \, |L_{p'} (\rn) \| \sim 1, \qquad m\in \zn, \quad 1 \le p' \le 2.
	\end{\eq}
	On the other hand, $\{\Ft f_m = \psi_m: m \in \zn \}$ is not \red{precompact} in $B^0_p (\rn)$ and $L_p (\rn)$, $2 \le p \le \infty$. This proves
	that $\Ft$ in \eqref{T3.5_1} and \eqref{T3.5_2} is not compact. 
      \end{proof}

\section{Compact mappings}   \label{S4}
\subsection{Entropy numbers}   \label{S4.1}
The mapping $\Ft$ in \eqref{3.1} for $B$-spaces is compact if, and only if, both $s_1 >0$ and $s_2 >0$. We  referred  the reader so far
to \cite{Tri22a} where we dealt with the decay properties of related entropy numbers. We return to this topic being now in a much
better position. Instead of $L_p (\rn)$, $1<p<\infty$, as used in \cite{Tri22a}, we can now rely on their improvements 
\begin{\eq}  \label{4.1}
\SBxp{0} (\rn) = B^0_p (\rn), \ 2\le p \le \infty, \quad \text{and} \quad \TBxp{0} (\rn) = B^0_p  (\rn), \ 1\le p \le 2.
\end{\eq}
This gives the possibility to argue more directly, to \purple{ prove upper estimates in the case}  $p= \infty$ (what was not possible in \cite{Tri22a} where we used 
sophisticated duality arguments for entropy numbers), and to improve some estimates in limiting situations. \purple{We also give some lower estimates for any $p>1$.} What follows  may also
justify the new notation according to Definition~\ref{D2.3} despite the otherwise a little bit curious index shifting.

\begin{definition}  \label{D4.1}
Let $T: A \hra B$ be a linear and continuous mapping from a Banach space $A$ into a Banach space $B$. Then the entropy number $e_k 
(T)$, $k\in \nat$, is the infimum of all $\ve >0$ such that
\begin{\eq}   \label{4.2}
T (U_A) \subset \bigcup^{2^{k-1}}_{j=1} (b_j + \ve U_B ) \quad \text{for some} \quad b_1, \ldots, b_{2^{k-1}} \in B,
\end{\eq}
where $U_A = \{ a\in A: \|a \,| A \| \le 1 \}$ and $U_B = \{ b\in B: \,\|b\,| B \| \le 1 \}$.
\end{definition}

\begin{remark}   \label{R4.2}
  \red{For basic properties we refer to \cite{CS,EdE87,Koe,Pie-s} (restricted to the case of Banach spaces), and \cite{ET96} for some extensions to quasi-Banach spaces, further references and details may also be found in \cite[Section 1.10, pp.\,55--58]{T06}}. We only mention that the linear and
continuous mapping $T: A \hra B$ is compact if, and only if, $e_k (T) \to 0$ for $k \to \infty$.
\end{remark}

{ We use the weighted spaces with admissible weights as already defined in \eqref{w1}--\eqref{w3} and the lift operator $I_\alpha$ defined by \eqref{w5}. 
Please note that \eqref{w6} means in particular that 
\begin{\eq}  \label{4.7}
\| (w_\alpha \wh{f} )^\vee |A^{s-\alpha}_{p,q} (\rn) \| \sim \| f \, | A^s_{p,q} (\rn) \|,
\end{\eq}
what is also an immediate consequence  of \eqref{2.11} for the Sobolev spaces.}
If $p=2$, then
\begin{\eq}   \label{4.8}
\SBxx{s}{2} (\rn) = \SHxx{s}{2} (\rn) = H^s (\rn) \quad \text{and} \quad \TBxx{s}{2} (\rn) = \THxx{s}{2} (\rn) = H^{-s}(\rn)
\end{\eq}
are the related distinguished Hilbert spaces $H^\sigma (\rn) = H^\sigma_2 (\rn) = B^\sigma_2 (\rn)$, $\sigma \in \real$. We use the
notation as introduced in Definition~\ref{D2.3}.


\red{
For later use we recall briefly what is known about the entropy numbers $e_k (\id_\alpha)$, $k\in \nat$, of the compact embeddings between the spaces 
introduced above,
\begin{\eq}   \label{4.7*}
\id_\alpha: \quad B^{s_1}_{p_1, q_1} (\rn, w_\alpha) \hra B^{s_2}_{p_2, q_2} (\rn).
\end{\eq}
Let $s_i \in \real$, $p_i, q_i \in (0, \infty]$, $i=1,2$, and $\alpha \ge 0$. Let
\begin{\eq}   \label{4.8*}
\delta = s_1 - \frac{n}{p_1} - \big( s_2 - \frac{n}{p_2} \big) \quad \text{and} \quad \frac{1}{p^*} = \frac{1}{p_1} + \frac{\alpha}{n}.
\end{\eq}
Then $\id_\alpha$ in \eqref{4.7*} is compact if, and only if,
\begin{\eq}   \label{4.9*}
s_1 > s_2, \quad \delta >0, \quad \alpha >0, \quad p_2 > p^*.
\end{\eq}
This coincides with \cite[Proposition 6.29, p.\,281]{T06} \red{and mainly relies on \cite{HaT94} and \cite[Section~4.3.2]{ET96}}. According to \cite[Theorem 6.31, pp.\,282--283, Theorem 6.33, p.\,284]{T06}
one has the following (almost) final assertions for the related entropy numbers. In Figure~\ref{fig-2} below we indicate each space of type $\Bs$ by its parameters $(\frac1p,s)$ (neglecting the fine parameter $q$). For a fixed source space $B^{s_1}_{p_1,q_1}(\rn, w_\alpha)$ the embedding $\id_\alpha$ to any target space $B^{s_2}_{p_2,q_2}(\rn)$ in the shaded area is compact.
}

\begin{figure}[h]
\noindent\begin{minipage}{\textwidth}
  ~\hfill\input{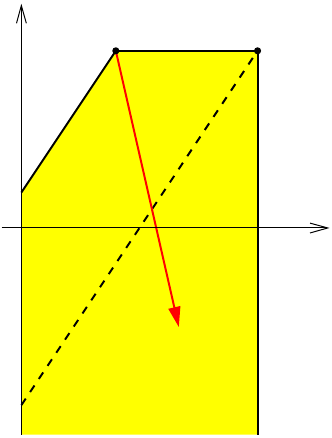_t}\hfill~\\
~\hspace*{\fill}\unterbild{fig-2}
\end{minipage}
\end{figure}
\smallskip~
  
\red{
\begin{proposition}   \label{P4.5}
Let $e_k (\id_\alpha)$, $k\in \nat$, be the entropy numbers of the compact embedding \eqref{4.7*}. Then, as illustrated in 
Figure~\ref{fig-2}, for $k\in\nat$,
\begin{equation}   \label{4.10*}
  e_k (\id_\alpha) \sim \begin{cases}
    k^{-\frac{s_1 - s_2}{n}},& \quad \text{if} \ \delta < \alpha, \\
    k^{- \frac{\alpha}{n} + \frac{1}{p_2} - \frac{1}{p_1}}, & \quad \text{if} \ \delta > \alpha.\end{cases}
 \end{equation}
Let, in addition,
\begin{\eq}   \label{4.12*}
\tau = \frac{s_1 - s_2}{n} + \frac{1}{q_2} - \frac{1}{q_1}.
\end{\eq}
Then, for $k\in\nat$,  $k\geq 2$,
\begin{\eq}   \label{4.13*}
  e_k (\id_\alpha) \sim \begin{cases} k^{- \frac{s_1 - s_2}{n}},&  \quad \text{if} \quad \delta = \alpha, \quad \tau <0,\\
    k^{-\frac{s_1 - s_2}{n}} \big( \log k)^{\tau},&  \quad \text{if} \quad \delta = \alpha, \quad \tau >0.\end{cases}
\end{\eq}
\end{proposition}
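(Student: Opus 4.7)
The plan is to reduce the problem to estimating entropy numbers between weighted sequence spaces via the wavelet isomorphism of Proposition~\ref{P2.5}, extended to the weighted setting as indicated in Remark~\ref{R2.7}. Concretely, I would identify $B^{s_1}_{p_1,q_1}(\rn, w_\alpha)$ and $B^{s_2}_{p_2,q_2}(\rn)$ with their wavelet coefficient sequence spaces $b^{s_1}_{p_1,q_1}(\rn, w_\alpha)$ and $b^{s_2}_{p_2,q_2}(\rn)$, where the weight $w_\alpha$ contributes an extra factor of order $(1 + 2^{-j}|m|)^\alpha$ on the coefficient $\lambda^{j,G}_m$. Since the wavelets $\psi^j_{G,m}$ are compactly supported, $w_\alpha(x) \sim (1 + 2^{-j}|m|)^\alpha$ on their support, so this reduction is essentially an isomorphism on the weighted sequence level.

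Next I would decompose $\id_\alpha$ into dyadic blocks: at each scale $j \in \no$, I split $\zn$ into annuli $\{ m : |m| \sim 2^{j+\ell}\}$ for $\ell \in \no$, giving a finite-dimensional embedding $\ell_{p_1}^{M_{j,\ell}} \hookrightarrow \ell_{p_2}^{M_{j,\ell}}$ with $M_{j,\ell} \sim 2^{n(j+\ell)}$, multiplied by an explicit factor coming from $s_i$, $p_i$, and the weight $w_\alpha \sim 2^{\ell \alpha}$ on that annulus. On each block I apply Schütt's classical entropy estimates for identities between finite-dimensional $\ell_p$-spaces, and then assemble the resulting bounds by standard additivity/multiplicativity properties of entropy numbers.

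The different regimes in \eqref{4.10*} and \eqref{4.13*} arise from a competition between two decay mechanisms as $j, \ell$ vary: the smoothness gap $s_1 - s_2$ (with the $p$-shift $1/p_1 - 1/p_2$) on the one hand, and the spatial decay rate $\alpha$ of the weight on the other. Balancing the two, the condition $\delta < \alpha$ means the smoothness contribution dominates and yields the rate $k^{-(s_1-s_2)/n}$, while $\delta > \alpha$ lets the weighted tails dominate and produce the rate $k^{-\alpha/n + 1/p_2 - 1/p_1}$. Lower bounds in each regime are obtained by selecting test sequences concentrated on a single annulus that saturate Schütt's lower bounds.

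The main obstacle is the borderline case $\delta = \alpha$ in \eqref{4.13*}, where all dyadic blocks contribute at comparable order and the fine indices $q_1, q_2$ begin to matter through the $\ell_{q}$-summation in the $b$-space norm: this is exactly where the parameter $\tau$ enters and, depending on its sign, produces either no logarithmic correction or an extra $(\log k)^\tau$ factor. Handling this limiting case requires a careful summation over $\ell$ paired with a log-sensitive version of the finite-dimensional estimates, in the spirit of \cite{HaT94}, \cite[Section~4.3.2]{ET96} and \cite[Theorems~6.31 and 6.33]{T06}, upon which the proposition rests.
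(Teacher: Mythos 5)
Your proposal is not wrong, but it should be said up front that the paper does not prove Proposition~\ref{P4.5} at all: it is quoted as a known result, with the non-limiting cases \eqref{4.10*} taken from \cite[Theorems~6.31, 6.33]{T06} (resting on \cite{HaT94} and \cite[Section~4.3.2]{ET96}) and the limiting case \eqref{4.13*} attributed to \cite{Ha97,HaT05,KLSS06}, cf.\ Remark~\ref{R4.6*}. What you sketch is essentially the proof strategy of those cited works rather than anything in this paper: wavelet discretization turning $B^{s_1}_{p_1,q_1}(\rn,w_\alpha)$ into a sequence space whose coefficients carry the factor $(1+2^{-j}|m|)^\alpha$, decomposition of $\id_\alpha$ into blocks indexed by scale $j$ and annulus $\ell$ of size $M_{j,\ell}\sim 2^{n(j+\ell)}$, Sch\"utt-type entropy estimates for $\ell^{M}_{p_1}\hra\ell^{M}_{p_2}$ on each block, and reassembly by (sub)additivity and multiplicativity of entropy numbers; this is exactly the route of \cite{HaT05,KLSS06} and, in its pre-wavelet form, of \cite{HaT94,ET96}. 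So your outline is a legitimate and historically accurate reconstruction, and it buys a self-contained argument where the paper merely cites. The one place where it is genuinely too thin is the case $\delta=\alpha$: there the blocks all contribute at the same polynomial order, and a ``careful summation with a log-sensitive version of the finite-dimensional estimates'' does not by itself yield \eqref{4.13*} --- in particular it does not explain why the fine indices enter only through $\tau=\frac{s_1-s_2}{n}+\frac1{q_2}-\frac1{q_1}$, why the logarithm disappears entirely when $\tau<0$, and how matching lower bounds are produced; naive assembly gives parasitic logarithms or a gap. This limiting case stayed open after \cite{HaT94,ET96}, was settled only partially in \cite{Ha97,HaT05} and completely in \cite{KLSS06} with arguments (refined estimates for diagonal operators between vector-valued sequence spaces and interpolation-type devices) that go beyond your sketch. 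Since the proposition is intended as a recall of the literature, deferring to those references is acceptable, but as a stand-alone proof your step for $\delta=\alpha$ would need to be substantially expanded.
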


\begin{remark}   \label{R4.6*}
  This is sufficient for our later purposes. \red{The limiting case \eqref{4.13*} was first solved in some special case in \cite{Ha97} and later solved completely in \cite{KLSS06}, see also \cite{HaT05} for some intermediate step.} 
  Further information and discussions may be found in \cite[Section 6.4, pp.\,279--286]{T06}.
\end{remark}

Now we can state our main result about the entropy numbers of the Fourier transform.
}


\begin{theorem}   \label{T4.3}
  \bli
\item
  Let $1 \le p \le \infty$, $s_1 >0$ and $s_2 >0$. Then
\begin{\eq}   \label{4.11}
\Ft: \quad \SBxp{s_1} (\rn) \hra \TBxp{s_2} (\rn) 
\end{\eq}
is compact and there exist  positive constants $C,c>0$ such that  for $k\in\nat$, $k\geq 2$, 
\begin{\eq}   \label{4.12}
e_k (\Ft) 
\le c
\begin{cases}
k^{- \frac{1}{n} \min(s_1, s_2)} &\text{if $s_1 \not= s_2$}, \\
\Big( \frac{k}{\log k} \Big)^{- \frac{s_1}{n}} &\text{if $s_1 = s_2$},
\end{cases}
\end{\eq}
and \red{for $p>1$,}
\begin{\eq}   \label{4.12a}
	e_k (\Ft) 
	\ge C
	\begin{cases}
		k^{- \frac{1}{n} \min(s_1, s_2)- |1-\frac{2}{p}|} &\text{if $s_1 \not= s_2$}, \\
		 k^{- \frac{s_1}{n}- |1-\frac{2}{p}|} \Big(\log k\Big)^{\frac{s_1}{n}}&\text{if $s_1 = s_2$},
	\end{cases}
\end{\eq}
for $k \in \nat$, $k\geq 2$. 
\item
Let $1<p<\infty$, $s_1 >0$ and $s_2 >0$. Then
\begin{\eq}   \label{4.13}
\Ft: \quad \SHxp{s_1} (\rn) \hra \THxp{s_2} (\rn)
\end{\eq}
is compact and the estimates \eqref{4.12} and \eqref{4.12a} hold. 

In particular, with $p=2$, we get for $s_1 >0$ and $s_2 >0$ that
\begin{\eq}  \label{4.9}
\Ft: \quad H^{s_1} (\rn) \hra H^{-s_2} (\rn)
\end{\eq}
is compact and
\begin{\eq}   \label{4.10}
e_k (\Ft) \sim
\begin{cases}
k^{- \frac{1}{n} \min(s_1, s_2)} &\text{if $s_1 \not= s_2$}, \\
\Big( \frac{k}{\log k} \Big)^{- \frac{s_1}{n}} &\text{if $s_1 = s_2$},
\end{cases}
\end{\eq}
$k \in \nat$, $k\geq 2$. 
\eli
\end{theorem}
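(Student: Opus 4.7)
My plan is to reduce the entropy numbers of $\Ft$ to those of a compact embedding between a weighted source space and an unweighted target space, whose asymptotics are provided by Proposition~\ref{P4.5}. The pivotal factorization is
\begin{equation*}
\SBxp{s_1}(\rn) \xrightarrow{\Ft} B^{\tnp}_p(\rn, w_{s_1}) \xrightarrow{\id_\alpha} B^{\tnp - s_2}_p(\rn) = \TBxp{s_2}(\rn),
\end{equation*}
reflecting the symbiosis, stressed in the abstract, that the Fourier transform converts smoothness into weight decay.

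To establish continuity of the first arrow I would use the identity $w_{s_1}(\xi)\,\wh f(\xi) = \wh{(1-\Delta)^{s_1/2} f}(\xi)$, the lifting isomorphism $(1-\Delta)^{s_1/2}: B^{s_1+\snp}_p(\rn) \to B^{\snp}_p(\rn)$, and the borderline case $s_1 = s_2 = 0$ of Theorem~\ref{T3.1}, which gives $\Ft: B^{\snp}_p(\rn) \to B^{\tnp}_p(\rn)$. For the second arrow I would invoke Proposition~\ref{P4.5} with source parameters $(\tnp, p, p, s_1)$ and target parameters $(\tnp - s_2, p, p)$; then $\delta = s_2$, all compactness conditions reduce exactly to the hypothesis $s_1, s_2 > 0$, and the entropy asymptotics read $e_k(\id_\alpha) \sim k^{-s_2/n}$ when $s_2 < s_1$, $e_k(\id_\alpha) \sim k^{-s_1/n}$ when $s_2 > s_1$, and $e_k(\id_\alpha) \sim (k/\log k)^{-s_1/n}$ in the limiting case $s_1 = s_2$ (since there $\tau = s_1/n > 0$). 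Combining via the multiplicativity $e_k(T_2 \circ T_1) \le \|T_1\| \cdot e_k(T_2)$ yields the upper bound \eqref{4.12}.

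The lower bound becomes sharp in the Hilbert-space case $p=2$: there the first arrow $\Ft: H^{s_1}(\rn) \to L_2(\rn, w_{s_1})$ is in fact an isometric isomorphism by Plancherel, since $\|w_{s_1}\wh f\|_{L_2} = \|(1-\Delta)^{s_1/2} f\|_{L_2} \sim \|f\|_{H^{s_1}}$. Consequently $\id_\alpha$ equals $\Ft$ composed with a bounded inverse, so $e_k(\id_\alpha)$ and $e_k(\Ft)$ are equivalent and \eqref{4.10} follows in both directions. For $p \ne 2$ this Plancherel isomorphism breaks down because of the Hausdorff-Young gap between $L_p$ and $L_{p'}$, and the lower bound \eqref{4.12a} carries the additional loss factor $k^{-|1-2/p|} = k^{-|\dnp|/n}$. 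I expect this to be the main obstacle; a natural route is to factor $\id_\alpha$ through $\Ft$ by inserting an auxiliary Hausdorff-Young step in the sequence-space realization afforded by Proposition~\ref{P2.5}, and absorb the resulting loss into the shifted exponent $|\dnp|/n$.

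Part (ii) on Sobolev spaces follows the same scheme, with $B^s_p = B^s_{p,p}$ replaced by $H^s_p = F^s_{p,2}$, Theorem~\ref{T3.3} supplying the base continuity of $\Ft: L_p(\rn) \to H^{\dnp}_p(\rn)$ and its dual counterpart, and the $F$-space version of Proposition~\ref{P4.5} giving identical entropy asymptotics; the fine parameter only enters through $1/q_2 - 1/q_1$ in the limiting case, and $q_1 = q_2 = 2$ still yields $\tau = s_1/n > 0$. At $p=2$ the two parts coincide and the estimate \eqref{4.10} is sharp in both directions by the Plancherel argument above.
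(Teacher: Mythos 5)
Your treatment of the upper bound \eqref{4.12} and of the case $p=2$ is correct and coincides in substance with the paper's proof: the paper also factorizes $\Ft$ as the continuous map $f\mapsto \wh f$ from $\SBxp{s_1}(\rn)$ into the weighted space ($B^0_p(\rn,w_{s_1})$ for $1\le p\le 2$, with the obvious modification for $p\ge 2$), obtained from the lift $I_{s_1}$ together with the borderline case of Theorem~\ref{T3.1} (resp.\ Theorem~\ref{T3.3}), followed by the compact embedding into $\TBxp{s_2}(\rn)$ whose entropy numbers are given by Proposition~\ref{P4.5}; multiplicativity then yields \eqref{4.12}, and at $p=2$ the Plancherel/lift isomorphism makes the comparison two-sided, giving \eqref{4.10}.

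The genuine gap is the lower bound \eqref{4.12a} for $p\neq 2$, which you only sketch (``I expect\dots a natural route is\dots''); as stated, the plan of ``inserting an auxiliary Hausdorff--Young step in the sequence-space realization'' does not produce \eqref{4.12a}. Two concrete ingredients are missing. First, the paper obtains the lower bound by factorizing a \emph{known} compact embedding through $\Ft$: by the weighted Hausdorff--Young inequality of Theorem~\ref{T3.5new} (whose proof uses the isomorphisms $W_\beta$, $I_\gamma$ and \eqref{w7}), $\Fti: B^0_{p'}(\rn,w_{s_1})\hra B^{s_1}_p(\rn)$ is continuous for $2\le p\le\infty$, so the diagram \eqref{diag2} gives $\id=\Ft\circ\Fti$ and hence $e_k\big(\id: B^0_{p'}(\rn,w_{s_1})\hra B^{\dnp-s_2}_p(\rn)\big)\le C\,e_k(\Ft)$; Proposition~\ref{P4.5}, now applied with $p_1=p'$ and $p_2=p$, produces exactly the exponent $-\tfrac1n\min(s_1,s_2)+\tfrac2p-1$ (and the $\log$-factor via \eqref{4.12*}, \eqref{4.13*} when $s_1=s_2$). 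The loss $|1-\tfrac2p|$ is thus not something to be ``absorbed''; it is the term $\tfrac1{p_2}-\tfrac1{p_1}$ forced by the fact that Hausdorff--Young changes the integrability from $p$ to $p'$ in the source of the comparison embedding. Second, this argument only covers $2\le p\le\infty$ (one needs $p'\le 2$ for the Hausdorff--Young direction); for $1<p<2$ the paper transfers the estimate by the duality of entropy numbers, i.e.\ the Artstein--Milman--Szarek theorem combined with the uniform convexity of the spaces and \eqref{3.20} -- a nontrivial step, since entropy numbers, unlike approximation numbers, are not self-dual in any elementary way, and your proposal does not address it at all. The analogous Sobolev-space lower bound in (ii) (via \eqref{diag1} and the embeddings $B^\sigma_{r,\min(r,2)}(\rn)\hra H^\sigma_r(\rn)\hra B^\sigma_{r,\max(r,2)}(\rn)$) is likewise asserted rather than proved.
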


\begin{proof}
{\em Step 1.}~ Note that \eqref{4.9}, \eqref{4.10} are reformulations of \cite[Theorem~4.8, part (i), pp.\,146--147]{Tri22a}. It is also a by-product of what follows.\\

{\em Step 2.}~ We first prove the upper estimates in (i) for $1\le p <2$, $s_1 >0$, $s_2 >0$.  Then one has  by \eqref{w6} the isomorphic mapping
\begin{\eq}   \label{4.14}
I_{s_1}: \quad \SBxp{s_1} (\rn) = B^{s_1 + \dnp}_p (\rn) \hra B^{\dnp}_p (\rn) = \SBxp{0} (\rn).
\end{\eq}
Combined with \eqref{3.1} one obtains
\begin{\eq}   \label{4.15}
\| w_{s_1} \wh{f} \, | B^0_p (\rn) \| \le c \, \|f \, | B^{s_1 + \dnp}_p (\rn) \|, \qquad f\in \SBxp{s_1} (\rn).
\end{\eq}
This reduces \eqref{4.11}, \eqref{4.12} to $e_k (\Ft) \le c e_k (\id)$, $k \in \nat$, of the embedding
\begin{\eq}   \label{4.16}
\id: \quad B^0_p (\rn, w_{s_1}) \hra B^{-s_2}_p (\rn), \qquad 1 \le p <2, \quad s_2 >0.
\end{\eq}
One has according to \ignore{\cite[Proposition 4.5, p.\,144]{Tri22a}, based on} \red{Proposition~\ref{P4.5}}, that
\begin{\eq}    \label{4.17}
e_k (\id) \sim
\begin{cases}
k^{- \frac{1}{n} \min(s_1, s_2)} &\text{if $s_1 \not= s_2$}, \\
\Big( \frac{k}{\log k} \Big)^{-\frac{s_1}{n}} &\text{if $s_1 = s_2$},
\end{cases}
\end{\eq}
$2 \le k \in \nat$. This proves the estimate from above in \eqref{4.12} with $1\le p \le 2$.
\\

{\em Step 3.}~ Similarly one proves \eqref{4.12} for the $B$-spaces with $2 \le p \le \infty$ (now including $p=\infty$ without
additional efforts). The $H$-counterpart in part (ii) of the above theorem can be obtained in the same way, based on Theorem~\ref{T3.3}. The crucial counterpart of \eqref{4.15} is now given by
\begin{\eq}   \label{4.18}
\| w_{s_1} \wh{f} \, | L_p (\rn) \| \le c \, \|f \, | H^{s_1 + \dnp}_p (\rn) \|, \qquad 1<p<2, \quad s_1 >0,
\end{\eq}
whereas \eqref{4.17} for 
\begin{\eq}   \label{4.19}
\id: \quad L_p (\rn, w_{s_1}) \hra H^{-s_2}_p (\rn), \qquad 1<p<\infty, \quad s_1 >0, \quad s_2 >0,
\end{\eq}
is covered by \red{Proposition~\ref{P4.5} again}, in the limiting case.\\

\emph{ Step 4.}~ Now we prove the estimates from below. First we consider a non-limiting case for Sobolev spaces. Let $2\le p<\infty$.  Then $\SHxp{s_1}(\rn)=H^{s_1}_p(\rn)$ and $\THxp{s_2}(\rn) = H^{-s_2+\dnp}_p(\rn)$, and we get the following commutative  diagram 
\begin{equation}\label{diag1} 
	\begin{tikzcd}[row sep=tiny]
		& H^{s_1}_p(\rn) \arrow[dd, "\Ft"] \\
		L_{p'}(\rn, w_{s_1}) \arrow[ur, "\Fti"] \arrow[dr, "\id"] & \\
		& H^{-s_2+\dnp}_p(\rn)
	\end{tikzcd}
\end{equation}
where we applied the Hausdorff-Young inequality \red{\eqref{T3.5_2}} to $\Fti$. Elementary properties of entropy numbers imply 
\begin{\eq}\label{new1}
	e_k\Big(\id: L_{p'}(\rn, w_{s_1}) \hookrightarrow H^{-s_2+\dnp}_p(\rn)\Big)\le C e_k(\Ft). 
\end{\eq} 
But it is known, \red{cf. Proposition~\ref{P4.5} together with the embedding $B^\sigma_{r,\min(r,2)}(\rn)\hra H^\sigma_r(\rn) \hra B^\sigma_{r,\max(r,2)}(\rn)$, $1<r<\infty$, $\sigma\in\real$, and its weighted counterpart,} that
\begin{align}\label{new2}
	e_k\Big(\id: L_{p'}(\rn, w_{s_1}) \hookrightarrow H^{-s_2+\dnp}_p(\rn)\Big) \sim  
	\begin{cases}
		k^{-\frac{s_2}{n}+\frac{2}{p}-1} &\qquad \text{if}\quad s_1>s_2,\\
		k^{-\frac{s_1}{n}+\frac{2}{p}-1} &\qquad \text{if}\quad s_1<s_2.
	\end{cases}
\end{align}
Thus \eqref{new1} and \eqref{new2} imply \red{\eqref{4.12a} for the Sobolev spaces}. In the limiting case $s_1=s_2$ we have
\begin{align}\label{new2*}
	e_k\Big(\id: L_{p'}(\rn, w_{s_1}) \hookrightarrow H^{-s_2+\dnp}_p(\rn)\Big) \ge   \red{c}\
		 k^{-\frac{s_1}{n}+\frac{2}{p}-1} (\log k)^\frac{s_1}{n},
\end{align}
and the estimate from below follows in the same way \red{by \eqref{4.13*}, since
  \begin{equation*}
    	e_k\Big(\id: L_{p'}(\rn, w_{s_1}) \hookrightarrow H^{-s_2+\dnp}_p(\rn)\Big) \ge  \red{c}\ 	e_k\Big(\id: B^0_{p'}(\rn, w_{s_1}) \hookrightarrow B^{-s_2+\dnp}_p(\rn)\Big) 
    \end{equation*}
and $\tau$ given by \eqref{4.12*} satisfies $\tau=\frac1n(s_1-\dnp) + \frac1p-\frac{1}{p'} = \frac{s_1}{n}>0$.}

In the case of the Besov spaces with $2\le p\le \infty$ we can argue similarly. Now we use the following commutative diagram 
\begin{equation}\label{diag2} 
	\begin{tikzcd}[row sep=tiny]
		& B^{s_1}_p(\rn) \arrow[dd, "\Ft"] \\
		B^0_{p'}(\rn, w_{s_1}) \arrow[ur, "\Fti"] \arrow[dr, "\id"] & \\
		& B^{-s_2+\dnp}_p(\rn)
	\end{tikzcd}
\end{equation}
where once more we benefit from the weighted version of the Hausdorff-Young inequality, \eqref{T3.5_1}. The corresponding estimate of the entropy numbers   $	e_k\Big(\id: B^0_{p'}(\rn, w_{s_1}) \hookrightarrow B^{-s_2+\dnp}_p(\rn)\Big)$ can be found \red{in Proposition~\ref{P4.5}. }

If $1 < p <2$, then we can use the duality of entropy numbers proved in \cite{AMS04} and \cite{AMST04}. This property asserts, in particular, that  if 
 one of the Banach spaces $A$ or $B$ is  uniformly convex, then for any compact operator $T:A\hookrightarrow B$ there exists a constant $c\ge 1$ and $\ell\in \nat$ such that $e_{\ell k}(T') \le c e_{ k}(T)$, where $T':B'\hookrightarrow A'$ is the dual operator to $T$, cf. \red{\cite[pp.332-333]{Pie07}}.  If $1<p<\infty$, then the spaces we deal with are uniformly convex, cf. \cite{CE88}, so the inequality \eqref{4.12a} follows by duality. 
\end{proof}


\begin{remark}  \label{R4.4}
  \red{
For $\Ft$ in \eqref{4.11} with $p=1$ we have so far only the upper estimate \eqref{4.12}. But \eqref{4.12a} remains valid also for $p=1$. For this purpose one has to modify the above interplay between Hausdorff-Young inequalities and embeddings. We do not go into detail here.
  }

Compared with \cite[Theorem~4.8, pp.\,146--147]{Tri22a} we did not only incorporate now $p= \infty$ in the $B$-part of the above 
theorem, but improved also the $\log$-perturbation in the limiting situation $s_1 = s_2$, caused there by additional embeddings (using
only \eqref{4.15} with $p=2$, because \eqref{3.1} was only available in the weaker version with $L_p (\rn)$, $1<p \le 2$, in  place of
$B^0_p (\rn)$). 
\end{remark}

\begin{remark}\label{R-ek-gap}
  It is obvious that apart from the case when $p=2$, we have a gap in the lower and upper estimate for the entropy numbers in \eqref{4.12} and \eqref{4.12a}. We do not have a complete result yet, but would like to point out, that the gap in the exponent is just characterised by $|d^n_p|$, that is, for $s_1\neq s_2$, \eqref{4.12} and \eqref{4.12a} can be summarised as
  \[
    c_1\ k^{-\frac1n \left(\min(s_1,s_2)\red{+}|d^n_p|\right)} \leq e_k(\Ft) \leq c_2\  k^{-\frac1n \min(s_1,s_2)},
    \]
and a similar expression in case of $s_1=s_2$. Though this gap spoils the optimal result so far, it may also indicate the influence of the number $|d^n_p|$, recall also Remark~\ref{R2.4}.     
\end{remark}

\subsection{Approximation numbers}   \label{S4.2}
In connection with \eqref{3.2} we relied on
\begin{\eq}   \label{4.20}
\wh{f}= \sum_{\substack{j\in \no, G\in G^j, \\ m\in \zn}} 2^{jn} (f, \wh{\psi^j_{G,m}}) \,\psi^j_{G,m}.
\end{\eq}
This representation is very well adapted to the study under which conditions the Fourier transform generates a nuclear mapping between
distinguished function spaces. This has been done in detail in \cite{HST22}. One may ask whether \eqref{4.20} can also be used to
say more about the degree of compactness measured in terms of related quantities. The first candidates are approximation numbers. But
we could not see how possible refinements may look like. Maybe the arguments used in Step~1 of the proof of Theorem~\ref{T3.1} are too
crude for this purpose. However, as an alternative one can argue as in the proof of Theorem~\ref{T4.3} with approximation numbers in 
place of entropy numbers. But first we collect some definitions and useful ingredients.

\begin{definition}   \label{D4.5}
Let $T: A \hra B$ be a linear and continuous mapping from a Banach space $A$ into a Banach space $B$. Then the approximation number
$a_k  (T)$ of $T$,
\begin{\eq}   \label{4.21}
a_k (T) = \inf \{ \|T-L \|: \ \rank L <k \}, \qquad k\in \nat,
\end{\eq}
\red{where} the infimum is taken over all linear mappings $L: A \hra B$ of $\rank L <k$, where $\rank L$ is the dimension of the range of $L$.
\end{definition}

\begin{remark}   \label{R4.6}
  Details and references may be found 
  \cite[Definition 1.87, Remark 1.88, pp.\,56--57]{T06} and \cite[Definition 2, Remark 6, 
pp.\,11--12]{ET96}, \red{we refer also to the general references given in Remark~\ref{R4.2}}. Quite obviously, $a_1 (T) = \|T \|$. Furthermore, if $\dim A \ge n$ and $\id: A \hra A$ is the identity, then
$a_k (\id) =1$ for $k=1, \ldots,n$. If $T: A \hra B$ is compact, then one has for the dual operator $T'$,
\begin{\eq}   \label{4.22}
a_k (T') = a_k (T), \qquad T': B' \hra A', \qquad k \in \nat.
\end{\eq}
Proofs of these assertions may be found in \cite[Corollary 2.4, Proposition 2.5, p.\,55]{EdE87}.
\end{remark}

Otherwise we rely on the same ingredients as in \eqref{4.7}--\eqref{4.8} and $H^\sigma (\rn) = H^\sigma_2 (\rn) = B^\sigma_2 (\rn)$,
$\sigma \in \real$. We use the same notation as in Theorem~\ref{T4.3} based on Definition~\ref{D2.3}. Let $\Bs (\rn, w_\alpha)$ be the
weighted spaces as introduced in \eqref{2.33}.

\red{
  We recall the counterpart of Proposition~\ref{P4.5} in case of approximation numbers. This goes back to \cite{Har95} and has also been quoted in \cite[Theorem~4.3.3, p.\,183]{ET96}, cf. also \cite{Ha97,Cae98,Skr05,Sk-Vyb,HaSk08}. 

  \begin{proposition}\label{prop-ak-w}
Let the above assumptions \eqref{4.8*} and \eqref{4.9*} be satisfied and $\id_\alpha$ be given by \eqref{4.7*}. Assume, in addition, that $\delta\neq \alpha$. In case of $0<p_1<2<p_2<\infty$, assume that 
$\min(\delta,\alpha)\neq\frac{n}{\min(p_1',p_2)}$ and exclude the case
$\delta>\alpha>\frac{n}{\min(p_1',p_2)}$. Then  
\begin{equation}
a_k(\id_\alpha)  \quad\sim\quad k^{-\varkappa},\quad k\in\nat,
\label{10}
\end{equation}
where $\varkappa$ is given by
\begin{equation*}
\varkappa =\left\{\!\!
\begin{array}{l@{\quad,\quad}l@{\qquad}c}
 \frac{\min(\delta,\alpha)}{n} & p_1\leq p_2\leq 2 \quad\mbox{or}\quad
2\leq p_1\leq p_2 ,& {\fbox{\bf A}} \\[2mm]
\frac{\min(\delta,\alpha)}{n} -\frac{1}{p_2}+\frac{1}{p_1} & p^\ast<p_2\leq p_1,
& \fbox{{\bf B}}\\[2mm]
\frac{s_1-s_2}{n}-\max(\frac12-\frac{1}{p_2},\frac{1}{p_1}-\frac12)  &
p_1<2<p_2 , \hfill \alpha > \delta >\frac{n}{\min(p_1',p_2)},& \fbox{\bf C}\\[2mm]
\frac{\alpha}{n} + \min(\frac12-\frac{1}{p_2},\frac{1}{p_1}-\frac12) &
p_1<2<p_2 , \hfill \delta>\alpha >\frac{n}{\min(p_1',p_2)},& \fbox{{\bf D}}\\[2mm]
\frac{\min(\delta,\alpha)}{n}\cdot \frac{\min(p_1',p_2)}{2} &
p_1<2<p_2 , \; \min(\delta,\alpha) < \frac{n}{\min(p_1',p_2)}.& \fbox{{\bf E}}
\end{array}\right.\hfill
\end{equation*}
In case of $0<p_1<2<p_2<\infty$ and $\delta>\alpha>\frac{n}{\min(p_1',p_2)}$ the estimate from below holds as given in \eqref{10}, $a_k(\id_\alpha)\geq c k^{-\varkappa}$, $k\in\nat$.
  \end{proposition}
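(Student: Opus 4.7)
The strategy is to transfer the question from function spaces to sequence spaces via the wavelet isomorphism, and then to apply the known fine estimates for approximation numbers of identity maps between finite-dimensional $\ell^N_{p}$ spaces.

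First, by Proposition~\ref{P2.5} and its extension to weighted spaces (cf.\ Remark~\ref{R2.7} and \cite[Theorem~6.15]{T06}), the weight $w_\alpha$ is converted into a sequence-space multiplier. More precisely, the embedding $\id_\alpha$ is equivalent, up to isomorphisms, to a sequence-space embedding between $b^{s_1}_{p_1,q_1}(\rn,w_\alpha)$ and $b^{s_2}_{p_2,q_2}(\rn)$, where a wavelet at scale $j$ centred near $2^{-j}m$ carries a weight $\sim (1+|2^{-j}m|)^\alpha$. I would next group the wavelet indices by dyadic shells $\{m:\,2^{\ell-1}\le 2^{-j}|m|<2^\ell\}$, $\ell\in\no$. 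On each such block the problem reduces to estimating the approximation numbers of an identity map $\id:\ell^{N_{j,\ell}}_{p_1}\to\ell^{N_{j,\ell}}_{p_2}$ with $N_{j,\ell}\sim 2^{jn+\ell n}$, multiplied by the scalar weight $2^{\ell\alpha}$ and the smoothness factors $2^{j(s_i-n/p_i)}$.

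The heart of the argument is then the combinatorial optimisation: given a total rank budget $k$, allocate ranks $k_{j,\ell}$ to each block so as to minimise
\[
\sup_{j,\ell}\;2^{\ell(\alpha- n/p_1)}\cdot 2^{-j\delta}\cdot a_{k_{j,\ell}}\bigl(\id:\ell^{N_{j,\ell}}_{p_1}\to \ell^{N_{j,\ell}}_{p_2}\bigr),
\]
subject to $\sum k_{j,\ell}<k$. Plugging in the classical sharp estimates for $a_m(\id:\ell^N_{p_1}\to\ell^N_{p_2})$ due to Pietsch, Gluskin, Kashin and Maiorov, and carrying out the Lagrange-type optimisation, yields the five regimes \fbox{A}--\fbox{E}. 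The lower bounds are obtained by testing the operator on a single block, where already $a_m(\id)\ge c$ for $m\le N/2$, and invoking the elementary monotonicity $a_k(\id_\alpha)\ge a_k(\id_\alpha\circ P)$ for any projection $P$ onto such a block.

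\textbf{Main obstacle.} The genuinely delicate cases are \fbox{C}, \fbox{D}, \fbox{E}, which occur precisely when $p_1<2<p_2$. In this regime the finite-dimensional approximation numbers $a_m(\id:\ell^N_{p_1}\to\ell^N_{p_2})$ are no longer a single power of $m/N$: there is a Kashin/Gluskin transition at $m\sim N/\min(p_1',p_2)^\ast$, explaining the threshold $n/\min(p_1',p_2)$ in the statement. Balancing this two-regime behaviour across dyadic shells against the competing growth $2^{\ell\alpha}$ and $2^{-j\delta}$ is what produces the different exponents $\varkappa$ and the $\max/\min$ corrections in C and D. The case $\delta>\alpha>n/\min(p_1',p_2)$ is the one in which upper and lower asymptotic rates are only known to agree in special situations, which is why the proposition only claims the estimate from below there.
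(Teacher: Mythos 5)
The first thing to note is that the paper does not prove this proposition at all: it is recalled as a known result, with the proof attributed to \cite{Har95}, quoted in \cite[Theorem~4.3.3]{ET96}, and completed/corrected in \cite{Ha97,Cae98,Skr05,Sk-Vyb,HaSk08} (with \cite{MynOt} for the limiting case used later). Your sketch is therefore not "the paper's proof done differently" but an outline of the strategy actually used in that cited literature: discretize via a (weighted) wavelet or atomic isomorphism, split the index set into blocks determined by the level $j$ and the dyadic distance shell $\ell$, reduce each block to $a_m(\id:\ell^{N}_{p_1}\to\ell^{N}_{p_2})$ with $N\sim 2^{(j+\ell)n}$, insert the Gluskin/Kashin-type finite-dimensional estimates, and optimize the rank allocation; the threshold $\frac{n}{\min(p_1',p_2)}$ and the cases \fbox{\bf C}--\fbox{\bf E} indeed come from the change of regime of the finite-dimensional approximation numbers when $p_1<2<p_2$. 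So the approach is the right one and consistent with how the result was originally obtained.

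That said, as a proof your proposal is only a scheme, and the entire content of the proposition sits in the parts you leave unexecuted. Two concrete points. First, your blockwise reduction is miswritten: with the weight $w_\alpha$ in the source space, the block $(j,\ell)$ contributes a factor $2^{-j\delta}\,2^{-\ell\alpha}$ in front of the unweighted identity $\ell^{N_{j,\ell}}_{p_1}\to\ell^{N_{j,\ell}}_{p_2}$ (possibly after renormalising both $\ell_p$-norms, which produces compensating factors $2^{\pm\ell n/p_i}$ on both sides), not $2^{\ell(\alpha-n/p_1)}\,2^{-j\delta}$; since the five exponents $\varkappa$ are exactly the outcome of balancing these factors, such sign/bookkeeping errors are fatal if not repaired. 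Second, the optimisation in the region $p_1<2<p_2$ is genuinely delicate — this is precisely where the published literature needed a corrigendum (\cite{Skr05,Sk-Vyb}) and where the two-sided estimate fails in case $\delta>\alpha>\frac{n}{\min(p_1',p_2)}$ (only the lower bound survives, as you correctly note). Carrying out that optimisation, and verifying the upper bounds by an explicit rank-allocation rather than a "Lagrange-type" heuristic, is the actual work; within the present paper it is replaced by the citations above, so if you want a self-contained argument you must either reproduce those computations or cite them as the paper does.
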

  
}


\red{
\begin{remark}
  Note that in case of $0<p_1<2<p_2<\infty$ and $\delta>\alpha>\frac{n}{\min(p_1',p_2)}$ there is an upper estimate available in the form that for any $\varepsilon>0$ there is some $c_\varepsilon>0$ such that for all $k\in\nat$, $k\geq 2$,  $a_k(\id_\alpha) \leq c_\varepsilon k^{-\varkappa} (\log k)^{\varkappa+\varepsilon+ 1/p_2}$. But we shall not need this weaker result in our argument below.\\
In the limiting case $\delta=\alpha$ there seems to be the only sharp result in  \cite[V,\,\S 3, Thm. 9]{MynOt} which yields in case of Sobolev spaces 
\begin{equation}
a_k(\id_\alpha : H^{s_1}_{p_1}(\rn, w_\alpha)\rightarrow H^{s_2}_{p_2}) \sim
\begin{cases} k^{-\frac{\min(\delta, \alpha)}{n}}, &
\delta\neq \alpha\\[1mm] k^{-\frac{\min(\delta,\alpha)}{n}}(\log k)^{\frac{\min(\delta,\alpha)}{n}}, & \delta=\alpha,\end{cases}
\label{myn-ot}
\end{equation}
where $ s_1>s_2$, $1<p_1\leq p_2\leq 2$ {or} $2\leq p_1\leq
p_2<\infty$, $\delta>0$, $\alpha > 0$.

\red{In the usual
$\big(\frac1p,s\big)$ diagram in Figure~\ref{fig-3} we have sketched the different possible 
cases  for $\varkappa$ according to Proposition~\ref{prop-ak-w} in the case $0<p_1<2$ and
$ \,\alpha>\frac{n}{p_1'}$. The thick dashed line refers to
$\delta=\frac{n}{\min(p_1',p_2)}$. Diagrams for the other cases and further details, concerning also partial results for the limiting situations, can be found in \cite{Ha97}.}

\end{remark}}

Now we can present the counterpart of Theorem~\ref{T4.3}.

\begin{theorem}   \label{T4.7}
  \bli
  \item
    Let $s_1 >0$ and $s_2 >0$. 
    Then
\begin{\eq} \label{4.23}
\Ft: \quad H^{s_1} (\rn) \hra H^{-s_2} (\rn)
\end{\eq}
is compact and
\begin{\eq}   \label{4.24}
a_k (\Ft) \sim 
\begin{cases}
k^{- \frac{1}{n} \min(s_1, s_2)} &\text{if} \ s_1 \not= s_2, \\
\Big( \frac{k}{\log k} \Big)^{- \frac{s_1}{n}} &\text{if}\ s_1 = s_2,
\end{cases}
\end{\eq}
for $k\in\nat$, $k\geq 2$.
\item Let $1< p < \infty$, $s_1 >0$, $s_2 >0$, $s_1 \not= s_2$. Then
\begin{\eq}   \label{4.25}
\Ft: \quad \SBxp{s_1} (\rn) \hra \TBxp{s_2} (\rn)
\end{\eq}
is compact and for some constants $c_2>c_1>0$ and all $k\in\nat$ we obtain
\begin{align}   \label{4.26}
  a_k (\Ft) &\le c_2\,  k^{-\frac{1}{n} \min(s_1, s_2)},
  \intertext{and}
   a_k(\Ft) & \geq c_1 \begin{cases} 
     k^{-\frac{\min(s_1,s_2)}{n} \red{-|\frac12-\frac1p|}} 
       & \text{if } \ \frac{\min(s_1,s_2)}{n}\geq \frac{1}{\max(p,p')}, \\ k^{-\frac{\min(s_1,s_2)}{n} \frac{\max(p,p')}{2}} & \text{if } \ \frac{\min(s_1,s_2)}{n}\leq\frac{1}{\max(p,p')}.
  \end{cases}\label{4.26a}
\end{align}
In particular, if $p=2$ and $s_1\neq s_2$, then
\begin{equation}
 \label{4.26b}
a_k (\Ft : \SBxx{s_1}{2}(\rn)\hra \TBxx{s_2}{2}(\rn)) \sim k^{-\frac{1}{n} \min(s_1, s_2)},\quad k\in\nat.
\end{equation}
\item   Let $1< p <\infty$, $s_1 >0$, $s_2 >0$ and $s_1 \not= s_2$. Then
\begin{\eq}   \label{4.27}
\Ft: \quad \SHxp{s_1} (\rn) \hra \THxp{s_2} (\rn)
\end{\eq}
is compact and $a_k (\Ft)$ can be estimated from above and below as in \eqref{4.26} and \eqref{4.26a}.
\eli
\end{theorem}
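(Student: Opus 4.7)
The plan is to run the argument of Theorem~\ref{T4.3} with approximation numbers in place of entropy numbers, invoking Proposition~\ref{prop-ak-w} instead of Proposition~\ref{P4.5}. For the \emph{upper bound} I would reuse the weighted estimates \eqref{4.15} and \eqref{4.18}: combining the lift \eqref{w6} with Theorem~\ref{T3.1}, resp.\ Theorem~\ref{T3.3}, the map $f \mapsto w_{s_1} \wh{f}$ is continuous from $\SBxp{s_1}(\rn)$ into $B^0_p(\rn)$, resp.\ from $\SHxp{s_1}(\rn)$ into $L_p(\rn)$, for $1 \le p \le 2$. Hence $\Ft$ factors through the compact embedding $\id : B^0_p(\rn, w_{s_1}) \hra B^{-s_2}_p(\rn)$ (with the obvious $H$-analog), whose approximation numbers fall into case~A of Proposition~\ref{prop-ak-w} since $p_1 = p_2 = p$, yielding $a_k(\id) \sim k^{-\min(s_1,s_2)/n}$ when $s_1 \neq s_2$ and the Mynbaev--Otelbaev log-refinement \eqref{myn-ot} when $s_1 = s_2$. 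Multiplicativity of approximation numbers then gives the upper estimate. In the Hilbert case $p=2$ the factorising map $Tf=\wh{f}:H^{s_1}(\rn)\to L_2(\rn, w_{s_1})$ is actually an \emph{isomorphism} by \eqref{2.11}, so $a_k(\Ft)\sim a_k(\id)$ and \eqref{4.24} follows in full. The range $2\le p < \infty$ reduces to $1<p\le 2$ by the self-duality $\Ft' = \Ft$, the identifications \eqref{3.23}, and \eqref{4.22}, swapping $s_1\leftrightarrow s_2$ (which preserves $\min(s_1,s_2)$).

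For the \emph{lower bound} in parts (ii) and (iii) I would mirror the commutative diagram \eqref{diag2}. For $2\le p < \infty$, the weighted Hausdorff--Young inequality of Theorem~\ref{T3.5new} gives continuity of $\Fti: B^0_{p'}(\rn, w_{s_1})\hra \SBxp{s_1}(\rn)$, and the composition $\Ft\circ \Fti$ collapses to the identity embedding $\id: B^0_{p'}(\rn, w_{s_1}) \hra B^{\dnp - s_2}_p(\rn)=\TBxp{s_2}(\rn)$. Multiplicativity yields $a_k(\Ft)\ge c\,a_k(\id)$. The key point is that this embedding now has $p_1 = p' < 2 < p = p_2$, so Proposition~\ref{prop-ak-w} falls into cases~C, D, E. A direct calculation produces $\delta = s_2$, $\alpha = s_1$ and $n/\min(p_1', p_2) = n/p = n/\max(p,p')$, so the threshold $\min(s_1,s_2) \gtrless n/p$ separates cases~C/D (both giving exponent $\min(s_1,s_2)/n + |1/2-1/p|$, after using that $1/p'-1/2 = 1/2-1/p$ makes the $\max$ and $\min$ coincide) from case~E (giving $\min(s_1,s_2)\max(p,p')/(2n)$). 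These are precisely the two lines of \eqref{4.26a}. The range $1<p<2$ follows by duality as in Step~5 of the proof of Theorem~\ref{T3.1}, since all spaces involved are uniformly convex and $a_k(T)=a_k(T')$ for compact $T$. For part (iii) the argument is identical after replacing Besov by Sobolev, noting that the relevant exponents $\varkappa$ are independent of the fine parameter $q$ in cases A, C, D, E, so the $H$-estimates follow from the $B$-estimates via the sandwich $B^\sigma_{p, \min(p,2)}\hra H^\sigma_p\hra B^\sigma_{p, \max(p,2)}$ and its weighted counterpart.

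The main obstacle is the limiting case $s_1 = s_2$ outside the Hilbert setting: Proposition~\ref{prop-ak-w} excludes $\delta = \alpha$, and the sharp log-behaviour in the mixed-$(p',p)$ cases C, D, E appears to be unavailable in the literature, which forces the hypothesis $s_1 \neq s_2$ in parts (ii) and (iii). A second, expected feature is that whenever $p\neq 2$ the upper and lower exponents do not match, producing a gap of size $|1/2 - 1/p|$ for $\min(s_1,s_2)\ge n/p$ (and of case~E type otherwise); this mirrors the entropy-number gap of Remark~\ref{R-ek-gap} and is intrinsic to this Hausdorff--Young-based method.
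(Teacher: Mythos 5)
Your proposal is correct and follows essentially the same route as the paper: upper bounds by factoring $\Ft$ through the weighted embeddings coming from \eqref{4.15}/\eqref{4.18} and case \textbf{A} of Proposition~\ref{prop-ak-w} (with \eqref{myn-ot} in the limiting Hilbert case), duality \eqref{4.22} for $p>2$; lower bounds via the diagrams \eqref{diag1}/\eqref{diag2} with the weighted Hausdorff--Young inequality and cases \textbf{C}, \textbf{D}, \textbf{E}, exactly as in the paper's proof. Only cosmetic differences remain (e.g.\ your $p=2$ isomorphism phrasing versus the paper's use of $\Fti$ in the diagram, and the unnecessary appeal to uniform convexity, since $a_k(T')=a_k(T)$ for compact $T$ needs no such assumption).
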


\begin{figure}[h]
    ~\hfill\input{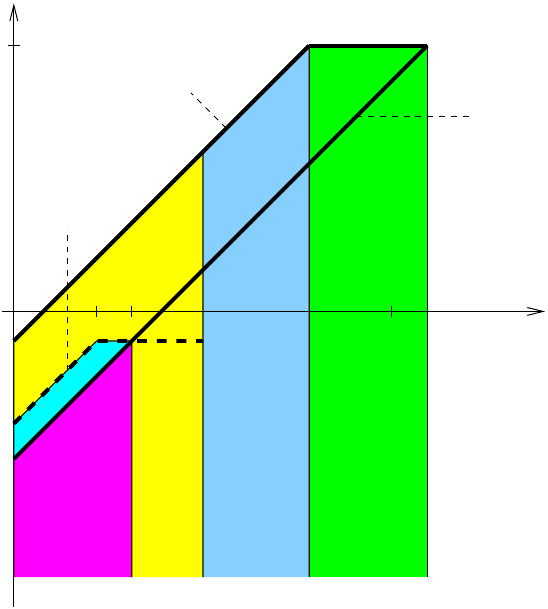_t}\hfill~\\
    ~\hspace*{\fill}\unterbild{fig-3}
\end{figure}

\begin{remark}
Obviously, if $p=p'=2$, then \eqref{4.26} together with \eqref{4.26a} coincide with \eqref{4.24}, recall ~\eqref{2.16a}, always assuming $s_1\neq s_2$, while for $p\neq 2$ we have a gap between the upper and lower estimate in (ii) and (iii).
  \end{remark}

\begin{proof} {\em Step~1}.~ We first deal with the estimates from above. Let $0<p<\infty$, $\sigma_1 >\sigma_2$, $\alpha >0$ and $0< q_1, q_2 \le \infty$. Then
\begin{\eq}   \label{4.28}
\id: B^{\sigma_1}_{p, q_1} (\rn, w_\alpha) \hra B^{\sigma_2}_{p, q_2} (\rn)
\end{\eq}
is compact and
\begin{\eq}   \label{4.29}
a_k (\id) \sim k^{-\frac{1}{n} \min(\sigma_1 - \sigma_2, \alpha)}, \qquad \sigma_1 - \sigma_2 \not= \alpha,
\end{\eq}
$k\in \nat$. This refers \red{to the case \fbox{\bf A} in Proposition~\ref{prop-ak-w}}. Specified to the above
situation one has for
\begin{\eq}   \label{4.30}
\id: \quad B^0_{p, q_1} (\rn, w_{s_1}) \hra B^{-s_2}_{p, q_2} (\rn), \qquad 1 \le p \le 2,
\end{\eq}
$s_1$, $s_2 >0$ with $s_1 \not= s_2$, that
\begin{\eq}   \label{4.31}
a_k (\id) \sim k^{- \frac{1}{n} \min(s_1, s_2)}, \qquad k \in \nat.
\end{\eq}
In case of $p=2$ the result \eqref{myn-ot} gives
for  Sobolev spaces in the limiting situation $s_1=s_2$ that
\begin{\eq}   \label{4.31a}
a_k (\id:H^0(\rn, w_{s_1})\hra H^{-s_1}(\rn)) \sim \Big( \frac{k}{\log k} \Big)^{- \frac{s_1}{n}}, \qquad k \in \nat, \ k\geq 2,
\end{\eq}
see also \cite{Har95,Ha97}. Now one can argue in the same way as in the proof of Theorem~\ref{T4.3}, using the multiplicativity of approximation numbers, $a_k(\Ft) \leq c\ a_k(\id)$, $k\in\nat$. This covers immediately the estimates from above in part (i), part (ii) with $1\le p <2$ and part (iii) with $1<p<2$. The estimate from above in the remaining  cases for $p>2$ in (ii) and (iii) are a matter of $\Ft' =\Ft$, the duality \eqref{4.22}, \eqref{3.20} (including now $p' =\infty$) and
its $H$-counterpart, similarly as in \eqref{3.23}. \\

{\em Step~2}.~Now we study the estimates from below. In case of (i) we benefit from the isomorphisms $\Ft, \Fti: L_2(\rn, w_{s_1})\hra H^{s_1}(\rn)$, and decompose
\begin{equation}
\begin{tikzcd}[row sep=tiny]
& H^{s_1}(\rn) \arrow[dd, "\Ft"] \\
L_2(\rn, w_{s_1}) \arrow[ur, "\Fti"] \arrow[dr, "\id"] & \\
& H^{-s_2}(\rn)
\end{tikzcd}
\end{equation}
  Thus, using again the multiplicativity of approximation numbers, we arrive at 
  \[ a_k(\Ft : H^{s_1}(\rn)\hra H^{-s_2}(\rn)) \geq c\ a_k(\id: L_2(\rn, w_{s_1})\hra H^{-s_2}(\rn)), 
    \]
    and complete the argument for \eqref{4.24} in view of \eqref{4.31}, \eqref{4.31a}.\\

    Now we deal with (iii) and assume first $2<p<\infty$. Then $\SHxp{s_1}(\rn)=H^{s_1}_p(\rn)$ and $\THxp{s_2}(\rn) = H^{-s_2+\dnp}_p(\rn)$, and 
    we can used once more the commutative diagram \eqref{diag1}. 
In the same way as above we can thus conclude that
\begin{equation}\label{ak-below}
  a_k(\Ft: \SHxp{s_1}(\rn) \hra \THxp{s_2}(\rn) ) \geq c\ a_k(\id: L_{p'}(\rn, w_{s_1}) \hra H^{-s_2+\dnp}_p(\rn)).
\end{equation}
For the latter embedding we \red{apply Proposition~\ref{prop-ak-w}}, that is,
\begin{equation*}
  a_k(\id: L_{p'}(\rn, w_{s_1}) \hra H^{-s_2+\dnp}_p(\rn)) \geq c\ \begin{cases} 
k^{-\frac{\min(s_1,s_2)}{n} - \frac12 + \frac1p} & \text{if}\quad \frac{\min(s_1,s_2)}{n}\geq \frac{1}{p}, \\ k^{-\frac{\min(s_1,s_2)}{n} \frac{p}{2}} & \text{if}\quad \frac{\min(s_1,s_2)}{n}\leq\frac{1}{p},
  \end{cases}
  \end{equation*}
  which together with \eqref{ak-below} leads to
  \begin{equation*}
a_k(\Ft: \SHxp{s_1}(\rn) \hra \THxp{s_2}(\rn) ) \geq c\ \begin{cases} 
k^{-\frac{\min(s_1,s_2)}{n} - \frac12 + \frac1p} & \text{if}\quad \frac{\min(s_1,s_2)}{n}\geq \frac{1}{p}, \\ k^{-\frac{\min(s_1,s_2)}{n} \frac{p}{2}} & \text{if}\quad \frac{\min(s_1,s_2)}{n}\leq\frac{1}{p}.\end{cases}
  \end{equation*}
  If $1<p<2$, then by the duality \eqref{4.22} and the preceding result we arrive at 
  \begin{equation*}
a_k(\Ft: \SHxp{s_1}(\rn) \hra \THxp{s_2}(\rn) ) \geq c\ \begin{cases} 
k^{-\frac{\min(s_1,s_2)}{n} + \frac12 - \frac1p} & \text{if}\quad \frac{\min(s_1,s_2)}{n}\geq \frac{1}{p'}, \\ k^{-\frac{\min(s_1,s_2)}{n} \frac{p'}{2}} & \text{if}\quad \frac{\min(s_1,s_2)}{n}\leq\frac{1}{p'}.
  \end{cases}
  \end{equation*}
Both cases can be summarised to
    \begin{align}\nonumber
      a_k(\Ft: \ & \SHxp{s_1}(\rn) \hra \THxp{s_2}(\rn) ) \\
      & \geq c\ \begin{cases} 
k^{-\frac{\min(s_1,s_2)}{n} + \min(\frac12 - \frac1p,\frac1p-\frac12)} & \text{if}\quad \frac{\min(s_1,s_2)}{n}\geq \frac{1}{\max(p,p')}, \\ k^{-\frac{\min(s_1,s_2)}{n} \frac{\max(p,p')}{2}} & \text{if}\quad \frac{\min(s_1,s_2)}{n}\leq\frac{1}{\max(p,p')}.
  \end{cases}\label{a_k-pnot2}
    \end{align}
The argument in the situation $\Ft: \SBxp{s_1}(\rn) \hra   \TBxp{s_2}(\rn)$ studied in (ii) works completely parallel, \red{recall Proposition~\ref{prop-ak-w}}. Now we use the diagram \eqref{diag2} instead of \eqref{diag1}.      
\end{proof}


\begin{remark}   \label{R4.8}
  Note that in contrast to Theorem~\ref{T3.1} we excluded in Theorem~\ref{T4.7}(ii) the limiting cases $p\in \{1,\infty\}$, mainly due to our proof method via Hausdorff-Young inequality and application of duality arguments. Apart from this, we have a complete and sharp result in case of $p=2$ only. 
 But we are not aware of any paper where this gap has been sealed afterwards.
\end{remark}






\bigskip\bigskip~

{\small
\noindent
Dorothee D. Haroske\\
Institute of Mathematics \\
Friedrich Schiller University Jena\\
07737 Jena\\
Germany\\
{\tt dorothee.haroske@uni-jena.de}\\[4ex]
%
Leszek Skrzypczak\\
Faculty of Mathematics \& Computer Science\\
Adam  Mickiewicz University\\
ul. Uniwersytetu Pozna\'nskiego 4\\
61-614 Pozna\'n\\
Poland\\
{\tt lskrzyp@amu.edu.pl}\\[4ex]
Hans Triebel\\
Institute of Mathematics \\
Friedrich Schiller University Jena\\
07737 Jena\\
Germany\\
{\tt hans.triebel@uni-jena.de}
}

\end{document}